\documentclass[letterpaper, 10 pt, conference]{IEEEconf}  
\IEEEoverridecommandlockouts
\title{\LARGE \bf
Hybrid Systems as Coalgebras: Lyapunov Morphisms for Zeno Stability
}

\author{Joe Moeller and Aaron D. Ames
\thanks{The authors are with the Department of Mechanical and Civil Engineering, California Institute of Technology, Pasadena, CA. 
{\tt\small \{jmoeller, ames\}@caltech.edu}
This research was supported by the Air Force Office of Scientific Research under the Multidisciplinary University Research Initiative grant Hybrid Dynamics ‐ Deconstruction and Aggregation (HyDDRA). We would like to thank Max de Sa, Álvaro Abella, and Paulo Tabuada for their help.
}%
}

\usepackage{amsthm, amssymb, amsmath, mathtools}
\usepackage{setspace}
\usepackage{xcolor}
\definecolor{darkgreen}{rgb}{0,0.45,0}
\usepackage[
    colorlinks, 
    citecolor=blue, 
    urlcolor=darkgreen, 
    final, 
    hyperindex, 
    linkcolor = blue
]{hyperref}
\usepackage{tikz}
\usepackage{tikz-cd}

\newcommand{\subsect}[1]{\vspace{0.5em}\noindent\textbf{#1.}}

\usepackage[capitalize]{cleveref}
\usepackage{enumerate}

\newtheorem{thm}{Theorem}
\newtheorem{prop}[thm]{Proposition}
\newtheorem{lem}[thm]{Lemma}
\newtheorem{cor}[thm]{Corollary}

\newtheorem{construction}[thm]{Construction}

\theoremstyle{definition}
\newtheorem{defn}[thm]{Definition}
\newtheorem{example}[thm]{Example}

\newtheorem{rmk}[thm]{Remark}

\newcommand{\define}[1]{{\bf \boldmath{#1}}}

\newcommand{\ex}[1]{
\hspace*{\fill}
\textcolor{gray}{#1}
}

\newcommand{\exmath}[1]{
\tag*{
\textcolor{gray}{$#1$}
} 
}

\newcommand{\maps}{\colon}

\newcommand{\id}{\mathrm{id}}

\newcommand{\chart}{\rightrightarrows}

\newcommand{\N}{\mathbb N}
\newcommand{\R}{\mathbb R}
\newcommand{\Rplus}{\R_{\geq 0}}

\newcommand{\K}{\mathcal K}

\newcommand{\namedcat}[1]{\mathsf{#1}}
\newcommand{\C}{\namedcat{C}}

\newcommand{\Chart}{\namedcat{Chart}}

\newcommand{\Man}{\namedcat{Man}}
\newcommand{\Set}{\namedcat{Set}}

\newcommand{\functor}[1]{\mathcal{#1}}
\newcommand{\F}{\functor{F}}
\renewcommand{\H}{\functor{H}}
\renewcommand{\P}{\functor{P}}
\newcommand{\T}{\functor{T}}
\newcommand{\U}[1]{\underline{#1}}

\begin{document}

\maketitle
\thispagestyle{empty}
\pagestyle{empty}

\begin{abstract}
    Hybrid dynamical systems exhibit a diverse array of stability phenomena, each currently addressed by separate Lyapunov-like results. We show that these results are all instances of a single theorem: a Lyapunov function is a morphism from a hybrid system into a simple stable target system $\sigma$, and different stability notions such as Lyapunov stability, asymptotic stability, exponential stability, and Zeno stability correspond to different choices of $\sigma$. This unification is achieved by expressing hybrid systems as coalgebras of an endofunctor $\mathcal H$ on a category $\mathsf{Chart}$ that naturally blends continuous and discrete dynamics.  Instantiating a general categorical Lyapunov theorem for coalgebras to this setting results in new Lypaunov-like conditions for the stability of Zeno equilibria and the existence of Zeno behavior in hybrid systems. 
\end{abstract}

\section{Introduction}
\label{sec:intro}

Hybrid dynamical systems couple continuous flow with discrete jumps \cite{alur1991hybrid, LygerosJohanssonSimic2003, SimicGeomHySys}. They arise throughout robotics and control: bipedal locomotion \cite{spong2005controlled, RESCLF}, robotic manipulation \cite{choset2005principles}, and mechanical systems with impacts \cite{NonsmoothMechanics} all exhibit hybrid behavior. A central challenge in hybrid systems theory is stability analysis, and Lyapunov methods have proven to be the right tool \cite{lamperski2012lyapunov, GoebelTeel08, GoebelSanfeliceTeel12}.

Lyapunov stability theorems for continuous-time and discrete-time systems are clearly analogous. In linear systems both reduce to eigenvalue conditions. Yet the analogous theory for hybrid systems remains less developed. Category theory provides a rigorous language for such analogies, identifying the common structure shared by different classes of system and proving theorems at that level of generality \cite{CatsinContext}. It has already found productive applications in systems and control theory, including signal flow graphs \cite{bonchi2014categorical, baez2014categories}, open dynamical systems \cite{tabuada2005quotients, schultz2020dynamical, CategoricalSystemsTheory}, bisimulation \cite{haghverdi2005bisimulation}, and hybrid systems \cite{lerman2020networks}. The categorical Lyapunov framework of \cite{CLT1, CLT2} axiomatizes Lyapunov stability in an arbitrary category and proves a general theorem from which classical results follow as special cases. In this paper, we instantiate that framework to hybrid systems.

We model hybrid systems as coalgebras for an endofunctor $\H$ on a category $\Chart$ of hybrid state spaces, and derive a unified Lyapunov theorem that simultaneously covers Zeno stability, stability of periodic orbits, and robust stability to set-valued resets. Hybrid automata \cite{alur1991hybrid} have been encoded coalgebraically before \cite{HybridAutomataCoalgebras}, however, this encoding is not suitable for our purposes. A key feature of our framework is the notion of a simulation morphism between hybrid systems, which allows stability results to transfer from simple systems to complex ones. We apply this to Lagrangian hybrid systems \cite{NonsmoothMechanics}, deriving Zeno stability by mapping to the bouncing ball and transferring along a simulation map.

\subsect{Contributions} We make the following contributions.
We introduce the category $\Chart$ and the endofunctor $\H$, and show that $\H$-coalgebras recover classical hybrid systems.
We instantiate the categorical Lyapunov framework of \cite{CLT1, CLT2} to $\H$-coalgebras, yielding a general hybrid Lyapunov theorem.
We apply this to Zeno stability, recovering and extending results of \cite{lamperski2012lyapunov, GoebelTeel08, ames2006stability}, including a new summability bound.
We prove stability of hybrid periodic orbits, recovering the RES result of \cite{RESCLF}.
We introduce simulation morphisms and a transference theorem, which we apply to Lagrangian hybrid systems.

\section{Systems as Coalgebras}
\label{sec:coalgebras}

The goal of this section is to develop the categorical language used throughout this paper. The idea is that various notions of system such as vector field, discrete time dynamical system, and labelled transition system are all instances of a notion from category theory called ``coalgebra'' \cite{rutten2000universal}. Any theory developed at the abstract level of coalgebras holds automatically in all the cases listed above and many more. We seek to include hybrid systems on this list. 

\begin{defn}
\label{def:coalgebra}
    Let $\C$ be a category and $\F \maps \C \to \C$ a functor. An \define{$\F$-coalgebra} is a pair $(X, f)$ consisting of an object $X \in \C$ and a morphism $f \maps X \to \F(X)$. A \define{morphism of $\F$-coalgebras} $\phi \maps (X, f) \to (Y, g)$ is a morphism $\phi \maps X \to Y$ in $\C$ such that the following diagram commutes:
    \begin{equation*}
    \begin{tikzcd}
        X \arrow[r, "\phi"] \arrow[d, "f"'] 
        & 
        Y 
        \arrow[d, "g"] 
        \\
        \F(X) 
        \arrow[r, "\F(\phi)"'] 
        & 
        \F(Y)
    \end{tikzcd}
    \ g(\phi(x))=\F(\phi)(f(x)) \ \forall x \in X
    \end{equation*}
    Let $\mathbf{Coalg}_\F$ denote the category of $\F$-coalgebras.
\end{defn}

\begin{example}
\label{ex:Tcoalgebra}
    Let $\Man$ denote the category of smooth manifolds with boundary and smooth maps, and let $\T \maps \Man \to \Man$ denote the tangent bundle functor. A $\T$-coalgebra is a smooth map $f \maps M \to \T M$. A vector field on $M$ is a $\T$-coalgebra which is a section of the canonical tangent bundle projection $\pi \maps \T M \to M$, i.e.\ satisfying $\pi \circ f = \mathrm{id}_M$.
\end{example}

\begin{example}
\label{ex:Pcoalgebra}
    Let $\Set$ denote the category of sets and functions, and let $\P \maps \Set \to \Set$ denote the covariant powerset functor. A $\P$-coalgebra is a map $f \maps X \to \P(X)$, which can be interpreted as a transition system: $f(x)$ is the set of states reachable from $x$ in one step.
\end{example}

\subsect{Hybrid Systems}
We assume the reader is familiar with the basic notions of hybrid systems theory.  We recall the following definitions in order to establish notation. 

\begin{defn}[\cite{lamperski2012lyapunov}, Def.\ 1]
\label{def:classichybrid}
    A \define{hybrid system} is a tuple $H = (\Gamma, D, G, R, F)$, where $\Gamma = (V,E)$ is a directed graph with vertices $V$ (discrete modes), edges $E$ (transitions), and source and target maps $s,t \maps E \to V$; $D = \{D_v\}_{v \in V}$ is a collection of smooth manifolds; $G = \{G_e\}_{e \in E}$ is a collection of guards $G_e \subseteq D_{s(e)}$; $R = \{R_e\}_{e \in E}$ is a collection of smooth reset maps $R_e \maps G_e \to D_{t(e)}$; and $F = \{f_v\}_{v \in V}$ is a collection of vector fields $f_v \maps D_v \to \T D_v$.
\end{defn}

The conceptual complexity of hybrid systems that makes their analysis trickier than other types of system continues into this categorical approach as well. As such, in order to realize hybrid systems as coalgebras, we introduce a functor $\H$ which naturally blends the functors $\T$ and $\P$ of Examples~\ref{ex:Tcoalgebra} and~\ref{ex:Pcoalgebra}. This requires a base category $\Chart$\footnote{The term ``chart'' is borrowed from \cite{CategoricalSystemsTheory} where monoidal categories of charts are constructed as a Grothendieck construction \cite{MonGroth}.} which is similarly a blend of $\Set$ and $\Man$. Note that sometimes a category is named after its morphisms rather than its objects. For a manifold $M$, let $\underline M$ denote the underlying set of $M$. 

\begin{defn}
\label{def:chart}
    Let $\Chart$ denote the following category. An object of $\Chart$ is a pair $(M, S)$, which we will usually denote by $\binom SM$, where $M$ is a manifold, and $S$ is a set. 
    A \define{chart} from $(M,S)$ to $(M',S')$, denoted by \[\binom{f_d}{f_c} \maps \binom SM \chart \binom{S'}{M'},\] is a pair of maps: $f_c \maps M \to M'$ smooth, and $f_d \maps S \times \U M \to S'$ function.
    The composite of two charts $\binom{f_d}{f_c} \maps \binom SM \chart \binom{S'}{M'}$ and $\binom{g_d}{g_c} \maps \binom{S'}{M'} \chart \binom{S''}{M''}$ is given by the composites:
    \begin{align*}
        S \times \underline M \xrightarrow{\id_S \times \Delta_M} S \times \underline M \times &\underline M \xrightarrow{f_d \times \underline{f_c}} S' \times \underline M' \xrightarrow{g_d} S''
        \\
        M \xrightarrow{f_c} &M' \xrightarrow{g_c} M''
    \end{align*}
    or more compactly:
    \[\binom{g_d \circ (f_d \times \underline{f_c}) \circ (\id_S \times \Delta_M)}{g_c \circ f_c}.\]
\end{defn}

\begin{prop}
\label{prop:Chartproducts}
    The object $\binom{1}{1}$ is a terminal object in $\Chart$. The product of $\binom SM$ and $\binom{S'}{M'}$ in $\Chart$ is $\binom{S \times S'}{M \times M'}$.
\end{prop}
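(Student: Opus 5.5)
We verify both universal properties directly from the definition of charts in Definition~\ref{def:chart}, using that $\Man$ has finite products (with $1$ the one-point manifold and $M\times M'$ the product manifold, assuming the category of manifolds with boundary in use admits them) and that $\Set$ does too.

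\emph{Terminal object.} Given $\binom SM$, a chart $\binom{f_d}{f_c}\maps\binom SM\chart\binom11$ consists of a smooth map $f_c\maps M\to 1$ and a function $f_d\maps S\times\U M\to 1$. Each exists and is unique because $1$ is terminal in $\Man$ and in $\Set$. Hence there is exactly one chart into $\binom11$.

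\emph{Products.} We take as projections the charts
\[
\pi_1=\binom{\pi_S\circ \mathrm{pr}_1}{\pi_M}\maps\binom{S\times S'}{M\times M'}\chart\binom SM,
\]
where $\mathrm{pr}_1\maps (S\times S')\times\U{(M\times M')}\to S\times S'$ discards the continuous coordinate and $\pi_S$ is the set projection. The projection $\pi_2$ onto $\binom{S'}{M'}$ is defined symmetrically. Given charts $\binom{f_d}{f_c}\maps\binom TN\chart\binom SM$ and $\binom{g_d}{g_c}\maps\binom TN\chart\binom{S'}{M'}$, define the pairing
\[
\binom{\langle f_d,g_d\rangle}{\langle f_c,g_c\rangle},
\]
with $\langle f_d,g_d\rangle\maps T\times\U N\to S\times S'$ and $\langle f_c,g_c\rangle\maps N\to M\times M'$ the usual pairings in $\Set$ and $\Man$. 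Next we compute $\pi_1\circ\binom{\langle f_d,g_d\rangle}{\langle f_c,g_c\rangle}$ using the composition formula. The continuous part is $\pi_M\circ\langle f_c,g_c\rangle=f_c$. The discrete part sends $(t,n)\mapsto(t,n,n)\mapsto\bigl((f_d(t,n),g_d(t,n)),\,(f_c(n),g_c(n))\bigr)\mapsto f_d(t,n)$, so it equals $f_d$. The same computation gives $\pi_2\circ\binom{\langle f_d,g_d\rangle}{\langle f_c,g_c\rangle}=\binom{g_d}{g_c}$.

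\emph{Uniqueness.} Suppose $\binom{h_d}{h_c}$ is any chart with $\pi_i\circ\binom{h_d}{h_c}$ equal to the given charts. The continuous components then force $h_c=\langle f_c,g_c\rangle$ by uniqueness in $\Man$. By the computation above, the discrete part of $\pi_1\circ\binom{h_d}{h_c}$ is $\pi_S\circ h_d$, because the projection's discrete component ignores the continuous input. Hence $\pi_S\circ h_d=f_d$ and $\pi_{S'}\circ h_d=g_d$, so $h_d=\langle f_d,g_d\rangle$.

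The only step requiring care is this unwinding of the composition formula with the diagonal $\Delta_M$. We must check that the projections' discrete parts genuinely depend only on the set coordinate, so that the discrete and continuous components decouple. Once that is checked, everything reduces to products in $\Set$ and $\Man$ separately.
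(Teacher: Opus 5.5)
Your proof is correct and takes the same approach as the paper. It builds the terminal object and the projections componentwise from $\Man$ and $\Set$, with projections $\pi_M$ and $\pi_S$ matching the paper's, and it carries out the universal-property check that the paper calls straightforward, including the key point that the projections' discrete components ignore the continuous input. Your caveat that this relies on $M \times M'$ being a product in $\Man$ is a fair one: it can fail for manifolds with boundary, where corners appear, and the paper leaves that assumption implicit.
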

\begin{proof}
    A chart into $\binom{1}{1}$ is the pair of unique maps $! \maps M \to 1$ and $! \maps S \times \underline{M} \to 1$ into the terminal objects of $\Man$ and $\Set$ respectively. The projection from $\binom{S \times S'}{M \times M'}$ onto $\binom SM$ is given by $\pi_M \maps M \times M' \to M$ in $\Man$ and $\pi_S \maps S \times S' \times \underline{M} \times \underline{M'} \to S$ in $\Set$. The other projection is given similarly, and it is straightforward to check that these satisfy the universal property of products.
\end{proof}

\begin{defn}
\label{def:H}
    The endofunctor $\H \maps \Chart \to \Chart$ is defined as follows. On objects: $\H \binom SM \coloneqq \binom{\P(S \times \underline{M})}{\T M}$.
    On a morphism $f = \binom{f_d}{f_c} \maps \binom SM \chart \binom{S'}{M'}$, define $\H f \maps \H\binom SM \chart \H\binom{S'}{M'}$ to have component maps
    \begin{align*}
        (\H f)_c &\maps \T M \to \T M', \\
        (\H f)_d &\maps \P(S \times \underline{M}) \times \underline{\T M} \to \P(S' \times \underline{M'}),
    \end{align*}
    given by $(\H f)_c = \T f_c$ and
    \[
        (\H f)_d = \P((f_d \times \underline{f_c}) \circ (\id \times \Delta_{\underline{M}})) \circ \pi_{\P(S \times \underline{M})}.
    \]
\end{defn}

\begin{prop}
\label{prop:Hfunctor}
    $\H$ is an endofunctor on $\Chart$.
\end{prop}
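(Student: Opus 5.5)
We must check three things: $\H f$ is a well-defined chart, $\H$ preserves identities, and $\H$ preserves composition. Write $\hat f \coloneqq (f_d \times \underline{f_c}) \circ (\id_S \times \Delta_{\underline M}) \maps S \times \underline M \to S' \times \underline{M'}$, so $\hat f(s,m) = (f_d(s,m), f_c(m))$. Then $(\H f)_d(A, v) = \P(\hat f)(A) = \hat f[A]$. This ignores the tangent vector $v$. Well-definedness is immediate: $\T f_c$ is smooth because $\T$ is a functor on $\Man$, and $(\H f)_d$ is a function $\P(S\times\underline M)\times\underline{\T M} \to \P(S'\times \underline{M'})$, which is exactly the data of a chart $\H\binom SM \chart \H\binom{S'}{M'}$.

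\textbf{Identities.} The identity chart on $\binom SM$ is $\binom{\pi_S}{\id_M}$, where $\pi_S \maps S\times \underline M \to S$ is the projection. This is forced by the composition formula: $f_d \circ (\pi_S \times \id_{\underline M})\circ(\id_S\times\Delta) = f_d$, and symmetrically on the other side. For the identity chart, $\hat f = \id_{S\times\underline M}$. Hence $(\H\,\id)_d = \P(\id)\circ \pi_{\P(S\times\underline M)} = \pi_{\P(S\times \underline M)}$, which is the set-component of the identity chart on $\H\binom SM$. The continuous component is $\T\id_M = \id_{\T M}$.

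\textbf{Composition.} The key observation is that the assignment $f \mapsto \hat f$ is functorial into $\Set$. Concretely, for $g\circ f$ the set component is $g_d(f_d(s,m), f_c(m))$ and the manifold component is $g_c f_c$. So
\[
\widehat{g\circ f}(s,m) = \big(g_d(f_d(s,m),f_c(m)),\, g_c f_c(m)\big) = \hat g(\hat f(s,m)).
\]
Thus $\widehat{g\circ f} = \hat g\circ \hat f$, and functoriality of $\P$ gives $\P(\widehat{g\circ f}) = \P(\hat g)\circ\P(\hat f)$.

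Now compute the set component of the composite chart $\H g\circ \H f$ using the composition rule of Definition~\ref{def:chart}. It sends $(A,v)$ to $(\H g)_d\big((\H f)_d(A,v), \T f_c(v)\big) = \P(\hat g)(\P(\hat f)(A))$, because $(\H g)_d$ discards its tangent argument. This equals $\P(\widehat{g\circ f})(A) = (\H(g\circ f))_d(A,v)$. The manifold components agree by functoriality of $\T$: $\T(g_cf_c) = \T g_c\circ \T f_c$.

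The only delicate point is the bookkeeping in the composition step. The composite's set component has a diagonal on $\underline{\T M}$ feeding $\T f_c(v)$ into $(\H g)_d$, and we need that this extra argument is harmlessly discarded by $\pi_{\P(\cdot)}$. Once the lemma $\widehat{g\circ f}=\hat g\circ\hat f$ is in hand, the remaining checks follow directly.
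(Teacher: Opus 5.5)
Your proof is correct and follows the paper's approach, just made explicit. The paper's two-line argument cites coassociativity of the diagonal for composition and counitality of the diagonal with the map to the terminal object for identities. These are exactly the facts behind your lemma $\widehat{g\circ f}=\hat g\circ\hat f$ and your check $\hat{\id}=\id$, which you verify elementwise, together with the point the paper leaves implicit: $(\H g)_d$ discards its tangent argument, so the extra diagonal on $\underline{\T M}$ in the composite is harmless.
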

\begin{proof}
    Associativity follows from the coassociativity of diagonal. Unitality follows from the unique map into the terminal object being counital to the diagonal map.
\end{proof}

An $\H$-coalgebra is a chart of the form
\[
    \binom{f_d}{f_c} \maps \binom SM \chart \H\binom SM = \binom{\P(S \times \underline{M})}{\T M},
\]
which consists of the following data: a smooth manifold $M$, a set $S$, a smooth map $f_c \maps M \to \T M$, and a function $f_d \maps S \times \underline{M} \to \P(S \times \underline{M})$. The continuous component $f_c$ specifies the continuous dynamics at each point of $M$. The discrete component $f_d$ assigns to each pair $(s, x) \in S \times \underline{M}$ a set $f_d(s, x) \subseteq S \times \underline{M}$ of possible next states after a jump. When $f_d(s, x) = \emptyset$, no jump is available and the system flows according to $f_c$. When $f_d(s, x)$ is nonempty, the system may jump to any $(s', x') \in f_d(s, x)$, instantaneously updating both the discrete mode and the continuous state.

This structure simultaneously encodes the continuous dynamics, the guard set (where $f_d$ is nonempty), and the reset map (the values in $f_d(s,x)$). The class of $\H$-coalgebras is broader than classical hybrid systems, as $S$ need not carry a graph structure, jumps need not be deterministic, and the guard need not be a submanifold.

\begin{construction}
\label{con:hybridcoalgebra}
    Given a hybrid system $H = (\Gamma, D, G, R, F)$ as in Def.~\ref{def:classichybrid}, we define an $\H$-coalgebra $\binom{f_d}{f_c} \maps \binom SM \chart \H\binom SM$ as follows:
    \begin{itemize}
        \item $S = V$, the vertex set of $\Gamma$,
        \item $M = \bigsqcup_{v \in V} D_v$, the disjoint union of all domains,
        \item $f_c = \bigsqcup_{v \in V} f_v \maps \bigsqcup_{v \in V} D_v \to \T M \cong \bigsqcup_{v \in V} \T D_v$, the vector fields assembled over all modes,
        \item Let $E_v = \{e \in E \mid s(e) = v\}$ be the set of outgoing edges from $v$. Then define $f_d \maps V \times \underline{M} \to \P(V \times \underline{M})$ :
        \[
            f_d(v, x) = \begin{cases} \bigcup_{e \in E_v} \{(t(e), R_e(x))\} & x \in \bigcup_{e \in E_v} G_e, \\ \emptyset & \text{otherwise.} \end{cases}
        \]
    \end{itemize}
    We assume that the guards $\{G_e\}_{e \in E_v}$ are disjoint for each  $v \in V$, so that $f_d(v,x)$ is either empty or a singleton.
\end{construction}

\begin{rmk}
\label{rmk:switchingsystems}
    A switching system, consisting of a family of vector fields $\{f_p\}_{p \in P}$ on a manifold $M$ with a switching signal selecting which vector field is active \cite{liberzon2003switching}, is also naturally encoded as an $\H$-coalgebra. It corresponds to the special case where $S = P$, $M$ is unchanged, $f_c = f_p$ for the currently active mode, and
    \[
        f_d(p, x) = \{(p', x) : p' \in P\}
    \]
    for all $p \in P$ and $x \in \underline{M}$. Two features distinguish this from Construction~\ref{con:hybridcoalgebra}: jumps are available at every point in the state space rather than only on a guard, and the continuous state is unchanged at each jump since the reset is the identity. The Lyapunov conditions of Section~\ref{sec:lyapunov} applied to this encoding recover the common Lyapunov function conditions for switching systems, as discussed in Section~\ref{sec:lyapunov}.
\end{rmk}

\subsect{Solutions}
A solution curve of an ODE $\dot x=f(x)$ on domain $X$ is a continuously differentiable map $\gamma \maps I \to X$ for some interval $I$ such that $\frac{d}{dt}\gamma(t)=f(\gamma(t))$. This equation can be expressed as a commutative diagram.
\[
\begin{tikzcd}
    I
    \arrow[r, "\gamma"]
    \arrow[d, "1_I"']
    &
    X
    \arrow[d, "f"]
    \\
    \T(I)
    \arrow[r, "\T(\gamma)"']
    &
    \T(X)
\end{tikzcd}
\]
Here $1_I$ denotes the constant $1$ vector field on $I$. Note that this diagram says that $\gamma$ is a morphism of $\T$-coalgebras $1_I \to f$, per Def.~\ref{def:coalgebra}. In the coalgebraic framework, a solution to an $\F$-coalgebra should be a morphism of coalgebras from a ``time domain object'' equipped with a ``unit clock coalgebra'' into the system of interest. 
We recall now the definition of an ``execution'' of a hybrid system, the analog of a solution curve.

\begin{defn}[\cite{lamperski2012lyapunov}, Def.\ 2]
\label{def:execution}
    A \define{hybrid system execution} of $H$ is a tuple $\chi = (\Lambda, I, \rho, C)$ where $\Lambda \subseteq \N$ is an index set, $I = \{I_j = [\tau_j, \tau_{j+1}]\}_{j \in \Lambda}$ is a collection of intervals, $\rho \maps \Lambda \to V$ assigns a discrete mode to each interval, and $C = \{c_j \maps I_j \to D_{\rho(j)}\}_{j \in  \Lambda}$ is a collection of continuous trajectories satisfying, for all $j, j+1 \in \Lambda$:
    \begin{equation}
    \label{classical_condition}
    \begin{split}
    \frac{d}{dt} c_j(t) = & f_{\rho(j)}(c_j(t)), \\
     c_j(\tau_{j+1}) \in & G_{(\rho(j), \rho(j+1))}, \\
     c_{j+1}(\tau_{j+1}) = & R_{(\rho(j), \rho(j+1))}(c_j(\tau_{j+1})).
    \end{split}
    \end{equation}
\end{defn}

\begin{defn}
\label{def:clockcoalgebra}
    Let $\Lambda$ be either $\N$ or a finite set $\{0, \ldots, n-1\}$ with its usual order, and let $\{I_j = [\tau_j, \tau_{j+1}]\}_{j \in \Lambda}$ be a collection of intervals. The \define{unit clock coalgebra} on $\binom{\Lambda}{\bigsqcup_\Lambda I_j}$ is the $\H$-coalgebra $\binom{1_d}{1_c} \maps \binom{\Lambda}{\bigsqcup_\Lambda I_j} \rightrightarrows \H\binom{\Lambda}{\bigsqcup_\Lambda I_j}$ with components:
    \begin{itemize}
        \item $1_c \maps \bigsqcup_\Lambda I_j \to \T\bigsqcup_\Lambda I_j$ the constant unit vector field on each component interval,
        \item $1_d \maps \Lambda \times \underline{\bigsqcup_\Lambda I_j} \to \P\left(\Lambda \times \underline{\bigsqcup_\Lambda I_j}\right)$ defined by
        \[
            1_d(j, k, t) = \begin{cases} \{(j+1, j+1, \tau_{j+1})\} & j = k,\ t = \tau_{j+1}, \\ \emptyset & \text{otherwise,} \end{cases}
        \]
        where we denote elements of $\Lambda \times \underline{\bigsqcup_\Lambda I_j}$ by triples $(j, k, t)$ with $j, k \in \Lambda$ and $t \in I_k$.
    \end{itemize}
\end{defn}

A \define{solution} to an $\H$-coalgebra $\binom{f_d}{f_c} \maps \binom SM \rightrightarrows \H\binom SM$ is a morphism of $\H$-coalgebras
\[
    \binom{\psi_d}{\psi_c} \maps \binom{\Lambda}{\bigsqcup_\Lambda I_j} \rightrightarrows \binom SM
\]
from a unit clock coalgebra into the system. Unpacking the coalgebra morphism condition, this requires:
\begin{equation}
    \frac{\partial \psi_c}{\partial t}(j, t) = f_c(\psi_c(j, t)), \label{eq:solutionflow} 
\end{equation}
\begin{align}
    &f_d(\psi_d(j, k, t), \psi_c(k, t)) = \label{eq:solutiondiscrete}
    \\&  \qquad  \begin{cases} \{(\psi_d(j+1, j+1, \tau_{j+1}), & j = k,\ t = \tau_{j+1}, 
    \\ \qquad\psi_c(j+1, \tau_{j+1}))\}
    \\ 
    \emptyset & \text{otherwise.} \end{cases} \nonumber
\end{align}

\begin{lem}
\label{lem:solutionsequiv}
    Let $\binom{f_d}{f_c} \maps \binom SM \rightrightarrows \H\binom SM$ be an $\H$-coalgebra obtained via Construction~\ref{con:hybridcoalgebra}. Then a chart $\binom{\psi_d}{\psi_c} \maps \binom{\Lambda}{\bigsqcup_\Lambda I_j} \rightrightarrows \binom SM$ is an $\H$-coalgebra morphism if and only if the conditions~\eqref{classical_condition} are satisfied.
\end{lem}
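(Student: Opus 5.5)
We will unpack the coalgebra morphism condition of Definition~\ref{def:coalgebra} for the specific chart structure of Definition~\ref{def:chart} and the functor $\H$ of Definition~\ref{def:H}, and then match it term-by-term with the conditions~\eqref{classical_condition}. Because $\Chart$ morphisms and $\H$ act componentwise on continuous and discrete parts, the commuting square splits into two equations. The continuous square is exactly \eqref{eq:solutionflow}. The discrete square, after computing the composite $(\H\psi)_d\circ(\id\times\Delta)$ applied to $1_d$, gives \eqref{eq:solutiondiscrete}. So the first step is to verify carefully that the morphism condition is equivalent to the pair \eqref{eq:solutionflow}--\eqref{eq:solutiondiscrete}. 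This is a direct computation with the composition formula of Definition~\ref{def:chart}: the image of the singleton $\{(j+1,j+1,\tau_{j+1})\}$ under $\P((\psi_d\times\U{\psi_c})\circ(\id\times\Delta))$ is the singleton displayed, and the image of $\emptyset$ is $\emptyset$.

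Next comes the dictionary between the data. Given a chart $\binom{\psi_d}{\psi_c}$, we set $c_j := \psi_c|_{I_j}$. Since $M=\bigsqcup_v D_v$ and each $I_j$ is connected, $c_j$ lands in a single $D_{v}$, and we define $\rho(j)$ to be that $v$. Conversely, given $(\Lambda,I,\rho,C)$, we define $\psi_c=\bigsqcup_j c_j$. For $\psi_d$, the natural choice is $\psi_d(j,k,t)=\rho(k)$. We must check that any morphism is forced to have essentially this form on the relevant arguments: \eqref{eq:solutiondiscrete} at $j=k$, $t=\tau_{j+1}$ requires $f_d(\psi_d(j,j,\tau_{j+1}),c_j(\tau_{j+1}))\neq\emptyset$. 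By Construction~\ref{con:hybridcoalgebra}, this means $c_j(\tau_{j+1})$ lies in some guard $G_e$ with $s(e)=\psi_d(j,j,\tau_{j+1})$, which in turn forces $\psi_d(j,j,\tau_{j+1})=\rho(j)$, since $G_e\subseteq D_{s(e)}$.

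With the dictionary in place, the equivalence follows by cases. The first condition of \eqref{classical_condition} is \eqref{eq:solutionflow}, restricted componentwise, using that $f_c=\bigsqcup f_v$. For the jump conditions we use the case $j=k$, $t=\tau_{j+1}$ with $j,j+1\in\Lambda$. Here $f_d(\rho(j),c_j(\tau_{j+1}))$ equals $\{(t(e),R_e(c_j(\tau_{j+1})))\}$, where $e$ is the unique edge given by the disjointness of guards, and this set must equal $\{(\rho(j+1),c_{j+1}(\tau_{j+1}))\}$. This holds if and only if $e=(\rho(j),\rho(j+1))$, the point $c_j(\tau_{j+1})\in G_e$, and $c_{j+1}(\tau_{j+1})=R_e(c_j(\tau_{j+1}))$. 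These are exactly the second and third lines of~\eqref{classical_condition}, with edges identified by their endpoints as in Definition~\ref{def:execution}.

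The main obstacle is the ``otherwise'' clause of \eqref{eq:solutiondiscrete}. It demands $f_d=\emptyset$ at every other point, i.e.\ that the trajectory avoids guards in the interior of each interval, and it also constrains $\psi_d(j,k,t)$ for $j\neq k$. Neither requirement appears explicitly in~\eqref{classical_condition}. I would handle this in two ways. First, exploit the freedom in choosing $\psi_d$ off the diagonal $j=k$, for example by choosing a value at which $f_d$ is empty, or by reading the lemma as asserting existence of a suitable $\psi_d$. Second, note that the interior-guard condition is the standard implicit convention in \cite{lamperski2012lyapunov} that jumps are forced on hitting the guard. Since the ``if'' direction fails without it, I would state this convention explicitly as the interpretation of Definition~\ref{def:execution} rather than try to derive it. The final endpoint of a finite $\Lambda$ also needs a remark: there the execution must not sit on a guard, which is again part of this convention.
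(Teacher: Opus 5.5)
Your forward direction is the paper's argument. You split the square into \eqref{eq:solutionflow} and \eqref{eq:solutiondiscrete}, then read guard membership and the reset off the singleton equation at $j=k$, $t=\tau_{j+1}$. Defining $\rho(j)$ as the mode whose domain contains $\psi_c(I_j)$ is sturdier than the paper's $\rho(j):=\psi_d(j,j,\tau_j)$. The reason is that $\psi_d(0,0,\tau_0)$ is constrained only by the ``otherwise'' clause and need not be the mode of $c_0$.

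You are also right that the ``otherwise'' branch does not follow from \eqref{classical_condition}. The paper calls the converse immediate with $\psi_d(j,k,t)=\rho(j)$. That choice already breaks at $(j,k,\tau_{k+1})$ whenever $j\neq k$, $\rho(j)=\rho(k)$ and $k+1\in\Lambda$: the clock prescribes $\emptyset$ there, but $f_d(\rho(k),c_k(\tau_{k+1}))\neq\emptyset$.

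The gap is in your remedy for these off-diagonal points. Every guard out of a mode $s$ lies in $D_s$. So making $f_d(\psi_d(j,k,\tau_{k+1}),c_k(\tau_{k+1}))$ empty forces $\psi_d(j,k,\tau_{k+1})\neq\rho(k)$, and such a value exists only if $|V|\geq 2$.

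Now take a single-mode system, e.g.\ the bouncing ball or the Lagrangian hybrid systems used later in the paper. There $S=V=\{v\}$ and $\psi_d$ is constant. Then \eqref{eq:solutiondiscrete} demands $f_d(v,c_0(\tau_1))\neq\emptyset$ at $(0,0,\tau_1)$ and $f_d(v,c_0(\tau_1))=\emptyset$ at $(1,0,\tau_1)$. Hence no chart from a clock with $|\Lambda|\geq 2$ is a coalgebra morphism at all, and the ``if'' direction fails for every execution with a jump. Reading $\psi_d$ existentially does not help, and neither does the forced-jump convention.

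A true statement needs one of two changes:
\begin{enumerate}
\item Add the hypothesis $|V|\geq 2$.
\item Modify the clock so that every $(j,k,\tau_{k+1})$ with $k+1\in\Lambda$ jumps to $(k+1,k+1,\tau_{k+1})$. With that clock, your choice $\psi_d(j,k,t)=\rho(k)$ works under your convention.
\end{enumerate}
Either way, $\psi_d(k,k,\tau_k)=\rho(k)$ is forced by the preceding jump for $k\geq 1$. So your convention must also keep the post-reset points $c_k(\tau_k)$ off the guards of $\rho(k)$ (unless $I_k$ is degenerate), not only the open interior of $I_k$.
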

\begin{proof}
    Define $\rho(j) := \psi_d(j, j, \tau_j)$, so $\rho \colon \Lambda \to V$ assigns a discrete mode to each interval.
    Condition \eqref{eq:solutionflow} is exactly \eqref{classical_condition} for $\psi_c$. 
    At $j = k$ and $t = \tau_{j+1}$, \eqref{eq:solutiondiscrete} and Construction~\ref{con:hybridcoalgebra} gives:
    \begin{align*}
        \{(\psi_d(j+1, j+1, \tau_{j+1}), &\psi_c(j+1, \tau_{j+1}))\} 
        \\&= \{(t(e), R_e(\psi_c(j, \tau_{j+1})))\}
    \end{align*}
    for the edge $e = (\rho(j), \rho(j+1)) \in E$, which gives $\psi_c(j+1, \tau_{j+1}) = R_e(\psi_c(j, \tau_{j+1}))$, the third equation of~\eqref{classical_condition}. The condition $\psi_c(j, \tau_{j+1}) \in G_e$ follows from the fact that $f_d$ is nonempty only on the guard, which is the second equation of~\eqref{classical_condition}. Otherwise $f_d = \emptyset$ and no jump occurs. Since the guards are pairwise disjoint for edges sharing a source, $f_d(v,x)$ is either empty or a singleton, so the set equality in \eqref{eq:solutiondiscrete} reduces to the pointwise conditions of \eqref{classical_condition}. The converse is immediate: given an execution $(\Lambda, I, \rho, C)$ satisfying~\eqref{classical_condition}, setting $\psi_c(j,t) = c_j(t)$ and $\psi_d(j,k,t) = \rho(j)$ yields an $\mathcal{H}$-coalgebra morphism.
\end{proof}


\subsect{Generalized Elements and Forward Invariance}
In category theory, points $x \in E$ are conflated with maps of the form $x \maps 1 \to E$ whose image is $\{x\}$. A point is the simplest thing that can carry the property of being an equilibrium. We can extend this to a more general notion by replacing the terminal object $1$ with an arbitrary object $Z$. A \define{generalized element} of an object $E$ is any morphism $z^* \maps Z \to E$. All points are generalized elements of the specific form $Z=1$. A subspace $U \subseteq E$ is also a generalized element $U \hookrightarrow E$. 

\begin{defn}
\label{def:forwardinvariant}
    A generalized element $z^* \maps Z \to E$ is \define{forward invariant} if $\forall$ solutions $c \maps I \to E$ such that $\exists$ a point $y \maps 1 \to Z$ with $c \circ 0_I = z^* \circ y$, $\exists$ a morphism $c_Z \maps I \to Z$ such that the following diagram commutes:
    \[
    \begin{tikzcd}
        1 \arrow[r, "y"] \arrow[d, "0_I"'] & Z \arrow[d, "z^*"] \\
        I \arrow[ur, dashed, "\exists c_Z" description] \arrow[r, "c"'] & E
    \end{tikzcd}
    \]
    That is, if a solution starts in the image of $z^*$, it remains in the image of $z^*$ for the rest of the time.
\end{defn}

When $Z = 1$ this reduces to the definition of an equilibrium point, with $z^* = x^*$, $y = \mathrm{id}$, and $c_Z = !$ the unique morphism to $1$. In $\Chart$, a generalized element of $\binom{S}{M}$ is a chart $\binom{z_d}{z_c} \maps \binom{Z_d}{Z_c} \rightrightarrows \binom{S}{M}$, consisting of a smooth map $z_c \maps Z_c \to M$ and a function $z_d \maps Z_d \times \underline{Z_c} \to S$.

\subsect{Zeno Stability}
An execution $\chi = (\Lambda, I, \rho, C)$ is a \define{Zeno execution} if
$\Lambda = \N$ and the \define{Zeno time} is finite:
\[
    \tau_\infty := \lim_{j \to \infty} \tau_j = \tau_0 + \sum_{j=1}^{\infty} (\tau_j - \tau_{j-1}) < \infty.
\]

\begin{defn}[\cite{lamperski2012lyapunov}, Def.\ 4]
\label{def:ZenoEquilibrium}
    A \define{Zeno equilibrium} of a hybrid system $H = (\Gamma, D, G, R, F)$ is a collection of points $z = \{z_v\}_{v \in V}$ with $z_v \in D_v$, satisfying:
    \begin{enumerate}
        \item \textbf{Non-Equilibrium:} $f_v(z_v) \neq 0$ for all $v \in V$.
        \item \textbf{Jump Invariance:} $z_v \in G_e$ and $R_e(z_v) = z_{v'}$ for all $e = (v, v') \in E$.
    \end{enumerate}
\end{defn}

\begin{example}
\label{ex:zenogeneralized}
    A Zeno equilibrium $z = \{z_v\}_{v \in V}$ of a hybrid system is a forward-invariant generalized element of the $\H$-coalgebra obtained via Construction~\ref{con:hybridcoalgebra}. The corresponding chart is $\binom{z_d}{z_c} \maps \binom{1}{V} \rightrightarrows \binom{V}{M}$ where $z_c \maps V \to M$ (with $V$ viewed as a discrete manifold) is given by $z_c(v) = z_v$, and $z_d \maps 1 \times V \cong V \to V$ is the canonical isomorphism. Forward invariance holds since any execution starting at some $z_v$ remains confined to the set $\{z_v\}_{v \in V}$ under repeated jumps.
\end{example}

\subsect{Hybrid Periodic Orbits}
Periodic orbits are central to the analysis of hybrid systems such as bipedal walking robots \cite{spong2005controlled, RESCLF}, where a stable gait corresponds to a stable periodic orbit of the underlying hybrid system.

\begin{defn}[Hybrid Periodic Orbit]
\label{def:hybridperiodicorbit}
    Let $\psi \maps \binom{\N}{\bigsqcup_\N I_j} \rightrightarrows \binom{S}{M}$ be a solution to an $\H$-coalgebra. For $K \in \N$ and $T \in \R_{>0}$, we say $\psi$ is \define{$(K,T)$-periodic} if for all $(k,t)$:
    \begin{align*}
        \psi_c(k+K, t+T) &= \psi_c(k,t), \\
        \psi_d(j+K, k+K, t+T) &= \psi_d(j,k,t).
    \end{align*}
    This implies $\tau_{j+K} = \tau_j + T$. The \define{hybrid periodic orbit} $\bar\psi$ is the forward-invariant generalized element $\bar{\psi} \maps \binom{[K]}{\bigsqcup_{j \in [K]} I_j} \rightrightarrows \binom{S}{M}$ given by restricting $\psi$ to its initial period, and the distance to the orbit is:
    \begin{equation}
        \|x\|_{\bar\psi} := \inf_{(k,t) \in [K] \times \bigsqcup_{[K]} I_j} d(x, \psi_c(k,t)).
    \end{equation}
\end{defn}

\section{Categorical Lyapunov Theory}
\label{sec:lyapunov}

Everything thus far has been expressible purely with equations, and thus (strictly) commutative diagrams. Lyapunov theory demands inequalities. In this section we give the categorical approach to Lyapunov theory, and so we begin with our incorporation of partial orders via ``posetal objects''.

\begin{defn}
\label{def:posetal}
    An object $R \in \C$ is \define{posetal} if each hom-set $\C(X, R)$ carries a partial order, denoted $f \Rightarrow g$ for $f \geq g$, such that for any $h \maps X \to Y$, if $g_1 \Rightarrow g_2 \maps Y \to R$ then $g_1 \circ h \Rightarrow g_2 \circ h$.
\end{defn}

Just as a diagram commutes when two paths are equal, we say a diagram \define{lax commutes} when the two paths satisfy an inequality as indicated by a $\Rightarrow$ symbol, e.g.\ \eqref{eqn:lyapunov}.

\begin{example}
\label{ex:posetal2}
    The real line $\R$ with its usual ordering is a posetal object in $\Set$, with $f \leq g \maps X \to \R$ defined pointwise: $f(x) \leq g(x)$ for all $x \in X$. More generally, any partially ordered set carries a pointwise posetal structure.
    
    Two natural choices of posetal object in $\Chart$ are relevant to this paper. The first is $\binom{1}{\R_{\geq 0}}$, where the discrete component is trivial and the order is determined entirely by the continuous component $f_c \leq g_c \maps M \to \R_{\geq 0}$ pointwise. All charts into $\binom{1}{\R_{\geq 0}}$ necessarily have the unique function into the terminal set as the discrete component. The second is $\binom{\R_{\geq 0}}{\R_{\geq 0}}$, where both components are ordered pointwise: $f \leq g \maps \binom{S}{M} \rightrightarrows \binom{\R_{\geq 0}}{\R_{\geq 0}}$ when $f_c \leq g_c \maps M \to \R_{\geq 0}$ and $f_d \leq g_d \maps S \times \underline{M} \to \R_{\geq 0}$ pointwise. 
    
    The functor $\H$ produces objects of the form $\binom{\P(S \times \underline{M})}{\T M}$, and we need posetal structures on these as well. We order $\T R_c$ fiberwise: two tangent vectors over the same base point $a \in R_c$ are compared by their components in $\T_a R_c \cong \R$. We order $\P(R_d \times \underline{R_c})$ by the \define{Hoare order}: $A \leq B$ iff for every $a \in A$ there exists $b \in B$ with $a \leq b$. When $B = \emptyset$, the condition holds vacuously only if $A = \emptyset$; when $A = \emptyset$, the condition holds for any $B$. 
\end{example}

\begin{defn}
\label{def:classK}
    Let $R \in \C$ be a posetal object with base point $0_R \maps 1 \to R$. A morphism $\alpha \maps R \to R$ is \define{class $\mathcal{K}$} if:
    \begin{enumerate}
        \item $\alpha$ is order-preserving: $f \Rightarrow g$ implies $\alpha \circ f \Rightarrow \alpha \circ g$,
        \item $\alpha$ has an order-preserving inverse $\alpha^{-1}$,
        \item $\alpha \circ 0_R = 0_R$.
    \end{enumerate}
\end{defn}

In $\Chart$, a class $\mathcal{K}$ morphism on a posetal object $\binom{R_d}{R_c}$ is a chart $\binom{\alpha_d}{\alpha_c} \maps \binom{R_d}{R_c} \rightrightarrows \binom{R_d}{R_c}$ where $\alpha_c \maps R_c \to R_c$ is a smooth classical class $\mathcal{K}$ function \cite{Khalil} (smooth, strictly increasing, and zero at zero) and $\alpha_d \maps R_d \times \underline{R_c} \to R_d$ is order-preserving in both arguments with an order-preserving inverse and $\alpha_d(0, 0) = 0$. If $\alpha_d$ depends only on its first argument, this reduces to a class $\mathcal{K}$ function on $R_d$.

\begin{defn}
\label{def:semimetric}
    Let $\C$ be a category with finite products and a posetal object $R$ with a base point $0_R$. A \define{semi-metric} on an object $E \in \C$ is a morphism $d \maps E \times E \to R$ satisfying:
    \begin{enumerate}
        \item $d \geq 0_R$, i.e.\ $d(x, y) \geq 0_R$ for all $x, y$,
        \item $\ker(d) \cong \Delta \maps E \to E \times E$, i.e.\ $d(x, y) = 0_R$ if and only if $x = y$.
    \end{enumerate}
    Given a semi-metric $d$ and a generalized element $z^* \maps Z \to E$, the \define{semi-norm} to $z^*$ is the morphism $\|\cdot\|_{z^*} \maps E \to R$ defined by $\|x\|_{z^*} = \inf_{z \in Z} d(x, z^*(z))$, where the infimum is taken in the posetal object $R$. See \cite{CLT2} for more on suprema/infima in posetal objects.
\end{defn}

\begin{defn}
\label{def:positivedefinite}
    A morphism $V \maps E \to R$ is \define{positive definite} with respect to a generalized element $z^* \maps Z \to E$ if there exist class $\mathcal{K}$ morphisms $\underline{\alpha}, \overline{\alpha} \maps R \to R$ such that the following diagram lax commutes:
    \[\begin{tikzcd}
        &
        R
        \arrow[dr, "\overline \alpha"]
        \\
        E
        \arrow[ur, "\|\cdot\|_{z^*}"]
        \arrow[rr, ""{name = V}, ""'{name = B}, "V"description]
        \arrow[dr, "\|\cdot\|_{z^*}"']
        &&
        R
        \\&
        R
        \arrow[ur, "\underline \alpha"']
        \arrow[from = 1-2, to = V, Rightarrow, ""]
        \arrow[from = B, to = 3-2, Rightarrow, ""]
    \end{tikzcd}
    \exmath{\underline{\alpha}(\| x \|_{z^*}) \leq V(x) \leq \overline{\alpha}(\| x\|_{z^*})}
    \]
\end{defn}

\begin{defn}
\label{def:stable}
    A forward-invariant generalized element $z^* \maps Z \to E$ is \define{stable} if there exists a class $\mathcal{K}$ morphism $\alpha \maps R \to R$ such that for all solutions $c \maps I \to E$, the following diagram lax commutes:
    \[
    \begin{tikzcd}[column sep = small]
        1 \arrow[r, "c_0"] \arrow[d, "!"'] & E \arrow[r, "\|\cdot\|_{z^*}"] & R
        \arrow[dll, Rightarrow]\arrow[d, "\alpha"] \\
        I
        \arrow[r, "c"']
        & E
        \arrow[r, "\|\cdot\|_{z^*}"']
        & R
    \end{tikzcd}
    \exmath{\|c(t)\|_{z^*} \leq \alpha(\|c(0)\|_{z^*}) \ \forall t \in I}
    \]
\end{defn}

\subsect{The Categorical Lyapunov Theorem}
We now have the structure necessary to state the general categorical Lyapunov theorem. We first need the notion of a ``measurement object'', which carries the ingredients needed to formulate stability.

\begin{defn}
\label{def:measurementobject}
    Let $\C$ be a category with finite products, $\F \maps \C \to \C$ an endofunctor, and $\mathcal{I}$ a collection of time domain objects each equipped with an initial point $0_I \maps 1 \to I$. A \define{measurement object} is an object $R \in \C$ equipped with a posetal structure, a base point $0_R \maps 1 \to R$, and an $\F$-coalgebra $\sigma \maps R \to \F(R)$ with a posetal structure on $\F(R)$, such that the \define{comparison property} holds: if the diagrams
    \begin{equation}
    \label{eq:comparisonproperty}
    \begin{tikzcd}[column sep = small]
        I
        \arrow[r, "\psi"]
        \arrow[d, "1_I"']
        &
        R
        \arrow[d, "\sigma"]
        \\
        \F(I)
        \arrow[r, "\F\psi"']
        &\F(R)
    \end{tikzcd}
    \
    \begin{tikzcd}[column sep = small]
        I
        \arrow[r, "\phi"]
        \arrow[d, "1_I"']
        &
        R
        \arrow[d, "\sigma"]
        \arrow[dl, Rightarrow]
        \\
        \F(I)
        \arrow[r, "\F\phi"']
        &\F(R)
    \end{tikzcd}
    \
    \begin{tikzcd}
        1
        \arrow[r, "0_I"]
        \arrow[d, "0_I"']
        &
        I
        \arrow[d, "\psi"]
        \arrow[dl, Rightarrow]
        \\
        I
        \arrow[r, "\phi"']
        &
        R
    \end{tikzcd}
    \end{equation}
    (lax) commute, meaning $\psi$ is a true solution, $\phi$ is a sub-solution, and $\phi(0_I) \leq \psi(0_I)$, then $\phi \leq \psi$.
\end{defn}

\begin{defn}[\cite{CLT2}]
\label{def:settingforstability}
    A \define{setting for dynamic stability} is a category $\C$ with finite products equipped with an endofunctor $\F \maps \C \to \C$, a collection $\mathcal{I}$ of time domain objects, a measurement object $R$, and a plant object $E$ with a semi-metric $d \maps E \times E \to R$.
\end{defn}

\begin{thm}[Categorical Lyapunov Theorem \cite{CLT2}]
\label{thm:CLT}
    Consider a setting for dynamic stability. Let $z^* \maps Z \to E$ be a forward-invariant generalized element in a system $f \maps E \to \F(E)$. If $V \maps E \to R$ is positive definite with respect to $z^*$ and the following diagram lax commutes, then $z^*$ is stable. 
    \begin{eqnarray}
    \label{eqn:lyapunov}
    \begin{tikzcd}
        E
        \arrow[r, "V"]
        \arrow[d, "f"']
        &
        R
        \arrow[d, "\sigma"]
        \arrow[dl, Rightarrow]
        \\
        \F(E)
        \arrow[r, "\F(V)"']
        &
        \F(R)
    \end{tikzcd}
    \hspace{1cm} \ex{\frac{\partial V}{\partial x} f(x) \leq \sigma(V(x))}
    \end{eqnarray}
\end{thm}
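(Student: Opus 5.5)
The plan is the classical comparison-lemma argument, carried out entirely with the categorical ingredients just introduced. Fix a solution $c \maps I \to E$, i.e.\ a morphism of $\F$-coalgebras $c \maps (I, 1_I) \to (E, f)$. The first step is to show that $\phi := V \circ c \maps I \to R$ is a \emph{sub-solution} of $\sigma$, meaning the middle diagram of \eqref{eq:comparisonproperty} lax commutes. To do this, paste the strictly commuting solution square $f \circ c = \F(c) \circ 1_I$ onto the lax square \eqref{eqn:lyapunov}. The inequality $\F(V)\circ f \Rightarrow \sigma \circ V$, or whichever direction is fixed by \eqref{eqn:lyapunov}, is then precomposed with $c$. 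Definition~\ref{def:posetal} guarantees that precomposition preserves the order on $\C(-, \F(R))$. By functoriality, $\F(V)\circ f \circ c = \F(V)\circ\F(c)\circ 1_I = \F(V\circ c)\circ 1_I$. So precomposing gives exactly the lax square for $\phi$, with the same orientation as in the comparison property.

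The second step produces the comparison solution. Let $\psi \maps I \to R$ be the solution of $\sigma$ with initial value $\psi \circ 0_I = V \circ c \circ 0_I$. Its existence is part of what a measurement object supplies, being a coalgebra $\sigma$ on $R$ with time domains $\mathcal{I}$; I would take existence of solutions from the initial point as part of the setting of \cite{CLT2}. With this choice the third diagram holds with equality, and the comparison property yields $V \circ c \Rightarrow \psi$. Now use positive definiteness (Definition~\ref{def:positivedefinite}). Composing with $\underline\alpha^{-1}$, which is order preserving by Definition~\ref{def:classK}, gives $\|c(t)\|_{z^*} \le \underline\alpha^{-1}(\psi(t))$. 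Also $\psi(0) = V(c(0)) \le \overline\alpha(\|c(0)\|_{z^*})$.

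The third step, which I expect to be the main obstacle, is to bound $\psi(t)$ by a class $\mathcal K$ function of its initial value, uniformly over $t$ and over solutions. The natural route is to use that $\sigma$ is a stable target. Since $0_R$ is an equilibrium of $\sigma$ and the comparison property gives monotonicity of solutions in their initial data, we would get $\psi(t) \le \beta(\psi(0))$ for some class $\mathcal K$ morphism $\beta$. In the simplest case, where $\sigma$ is nonpositive so that solutions are nonincreasing, we have $\beta = \id$. That case follows by applying the comparison property a second time, now comparing $\psi$ with the constant map at $\psi(0)$, which is a super-solution. In general I would treat ``solutions of $\sigma$ are bounded by $\beta$ of initial data'' as the defining stability hypothesis on the target, as in \cite{CLT2}.

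Chaining the three steps then gives
\[
\|c(t)\|_{z^*} \le \underline\alpha^{-1}\bigl(\beta(\overline\alpha(\|c(0)\|_{z^*}))\bigr).
\]
It remains to set $\alpha := \underline\alpha^{-1}\circ\beta\circ\overline\alpha$ and check that it is class $\mathcal K$. Composites of order-preserving maps with order-preserving inverses are again of this kind, and each factor fixes $0_R$, so $\alpha$ qualifies. Finally, forward invariance of $z^*$ makes the semi-norm well defined along solutions that start on $z^*$. The resulting bound is exactly the lax square of Definition~\ref{def:stable}.
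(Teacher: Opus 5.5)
The paper gives no proof of this theorem; it is quoted from \cite{CLT2}, so there is no in-text argument to compare with. Your route is the standard comparison argument, and it is the one the paper's applications presuppose. You paste the solution square onto the lax square so that $V\circ c$ is a sub-solution, compare it with the $\sigma$-solution from $V(c(0))$, bound that solution uniformly, and sandwich with the positive-definiteness bounds to get $\alpha=\underline\alpha^{-1}\circ\beta\circ\overline\alpha$. The argument is correct under the two hypotheses you import.

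You are right to import them. Def.~\ref{def:measurementobject} as transcribed asks only for the comparison property, and then the statement is false. Take $\dot x = x$ on $\R$ with $d(x,y)=(x-y)^2$, $V(x)=x^2$ and $\sigma(r)=2r$. This meets every stated hypothesis: the class-$\mathcal K$ bounds are identities, and the comparison property holds by Gr\"onwall. Yet $0$ is unstable. So the setting must include a uniform class-$\mathcal K$ bound on $\sigma$-solutions. This is the ``stable target'' of the abstract, and it is tacitly used in Cor.~\ref{cor:orbitstability}. The setting must also include existence of $\sigma$-solutions from arbitrary initial values on each time domain.

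There are two minor slips. First, in your aside on nonpositive $\sigma$, the comparison property as stated only bounds sub-solutions by \emph{true} solutions. It therefore does not give $\psi\le\psi(0)$ via a constant super-solution; you need the dual comparison statement, or concretely just $\dot\psi=\sigma(\psi)\le 0$. Second, forward invariance is not used anywhere in the estimate. It is only the standing hypothesis under which Def.~\ref{def:stable} is formulated.
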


\subsect{CLT for $\H$-coalgebras}
We now instantiate Thm.~\ref{thm:CLT} to the setting of $\H$-coalgebras. The time domains are the hybrid time domain objects $\binom{\Lambda}{\bigsqcup_\Lambda I_j}$ with unit clock coalgebras from Def.~\ref{def:clockcoalgebra}, and the plant is an $\H$-coalgebra $\binom{f_d}{f_c} \maps \binom{S}{M} \chart \H\binom{S}{M}$. Unpacking the lax commutativity condition of Thm.~\ref{thm:CLT} in $\Chart$ for a Lyapunov morphism $\binom{V_d}{V_c} \maps \binom{S}{M} \chart \binom{R_d}{R_c}$ yields two conditions: a flow condition on $V_c$ and a jump condition coupling both components.

\begin{thm}[CLT for $\H$-coalgebras]
\label{thm:CLT_hybrid}
    Consider a setting for dynamic stability with $\H$-coalgebra $\binom{f_d}{f_c} \maps \binom{S}{M} \chart \H\binom{S}{M}$, measurement object $\binom{\sigma_d}{\sigma_c} \maps \binom{R_d}{R_c} \chart \H\binom{R_d}{R_c}$, and forward-invariant generalized element $z^* \maps \binom{Z_d}{Z_c} \chart \binom{S}{M}$. Suppose there exist class $\K$ morphisms $\underline{\alpha}, \overline{\alpha} \maps \binom{R_d}{R_c} \chart \binom{R_d}{R_c}$ and a chart $\binom{V_d}{V_c} \maps \binom{S}{M} \chart \binom{R_d}{R_c}$ satisfying:

    \textbf{Positive definiteness.}
    \begin{align}
        \underline{\alpha}_c(\|x\|^c_{z^*}) \leq V_c(x) &\leq \overline{\alpha}_c(\|x\|^c_{z^*}), \label{eq:pd_c} \\
        \underline{\alpha}_d(\|(s,x)\|^d_{z^*}, \|x\|^c_{z^*}) \leq V_d(s,x) &\leq \overline{\alpha}_d(\|(s,x)\|^d_{z^*}, \|x\|^c_{z^*}). \nonumber
    \end{align}
    \vspace{-6mm}

    \textbf{Flow condition.}
    \begin{equation}
        \frac{dV_c}{dt} \cdot f_c(x) \leq \sigma_c(V_c(x)). \label{eq:flow}
    \end{equation}

    \textbf{Jump condition.} 
    For all $(s,x) \in S \times \underline{M}$:
    \begin{align}
        \{(V_d(s',x'), V_c(x')) &\mid (s',x') \in f_d(s,x)\} \nonumber
        \\&\leq \sigma_d(V_d(s,x), V_c(x)). 
        \label{eq:jump}
    \end{align}

    Then $z^*$ is stable: there exists a class $\K$ morphism $\alpha \maps \binom{R_d}{R_c} \chart \binom{R_d}{R_c}$ s.t. for any solution $\binom{\gamma_d}{\gamma_c} \maps \binom{\Lambda}{\coprod_\Lambda I_j} \chart \binom{S}{M}$:
    \begin{align}
        \|\gamma_c(j,t)\|^c_{z^*} &\leq \alpha_c(\|\gamma_c(0,0)\|^c_{z^*}), \label{eq:stab_c} \\
        \|(\gamma_d(j,j,t), \gamma_c(j,t))\|^d_{z^*} &\leq \alpha_d(\|\gamma_d(0,0,0)\|^d_{z^*}, \|\gamma_c(0,0)\|^c_{z^*}). \nonumber
    \end{align}
\end{thm}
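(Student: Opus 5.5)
The plan is to derive Thm.~\ref{thm:CLT_hybrid} as a direct instance of Thm.~\ref{thm:CLT}, with $\C = \Chart$, $\F = \H$, the time domains the unit clock coalgebras of Def.~\ref{def:clockcoalgebra}, $E = \binom SM$, and $R = \binom{R_d}{R_c}$. Since the measurement object and the semi-metric are part of the hypotheses, the work is to check that the three hypotheses of Thm.~\ref{thm:CLT} are exactly the conditions \eqref{eq:pd_c}, \eqref{eq:flow}, \eqref{eq:jump}, and then to unpack the conclusion of Def.~\ref{def:stable} into \eqref{eq:stab_c}.

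\textbf{Positive definiteness.} First I will compute the semi-norm $\|\cdot\|_{z^*} \maps \binom SM \chart \binom{R_d}{R_c}$ componentwise. By Prop.~\ref{prop:Chartproducts}, a semi-metric $d$ on $\binom SM$ is a chart out of $\binom{S\times S}{M\times M}$. Taking the infimum over $z^*$ componentwise gives a continuous part $\|x\|^c_{z^*}$ and a discrete part $\|(s,x)\|^d_{z^*}$. Next I will compose with $\underline\alpha$ and $\overline\alpha$ using the composition rule of Def.~\ref{def:chart}. The continuous component is $\alpha_c(\|x\|^c_{z^*})$, and the discrete component is $\alpha_d(\|(s,x)\|^d_{z^*}, \|x\|^c_{z^*})$, because of the diagonal copy of $\underline M$. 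Since the order on $\binom{R_d}{R_c}$ is componentwise (Example~\ref{ex:posetal2}), the lax diagram of Def.~\ref{def:positivedefinite} becomes precisely the pair \eqref{eq:pd_c}.

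\textbf{Lyapunov diagram.} Here I will expand both paths of \eqref{eqn:lyapunov} using Def.~\ref{def:H}. For the continuous component, $\T V_c \circ f_c$ is compared with $\sigma_c \circ V_c$ in the fiberwise order on $\T R_c$. Both vectors sit over the base point $V_c(x)$, so the comparison is a comparison of scalars, which is exactly \eqref{eq:flow}. For the discrete component, the composite is $(\H V)_d \circ (f_d \times \underline{f_c}) \circ (\id\times\Delta)$. Because $(\H V)_d$ projects away the $\underline{\T M}$ factor, this composite sends $(s,x)$ to the image of $f_d(s,x)$ under $(s',x')\mapsto (V_d(s',x'),V_c(x'))$. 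The other path sends $(s,x)$ to $\sigma_d(V_d(s,x),V_c(x))$. Comparing these in the Hoare order gives \eqref{eq:jump}.

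\textbf{Conclusion.} Thm.~\ref{thm:CLT} then yields a class $\K$ chart $\alpha$ with $\|\cdot\|_{z^*}\circ\gamma \le \alpha\circ\|\cdot\|_{z^*}\circ\gamma\circ 0_I$. I will unpack this componentwise with the same composition rule. The continuous part gives \eqref{eq:stab_c}. The discrete part, evaluated at $(j,j,t)$ and using the initial point $(0,0,0)$ of the clock domain, gives the second inequality.

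\textbf{Main obstacle.} The step I expect to be delicate is the discrete unpacking, in two respects. First, the functor $\H$ forgets the tangent-vector argument, so I must confirm that the diagonal bookkeeping genuinely produces $V_c(x')$ evaluated at the \emph{post-jump} point. Second, the empty-set conventions of the Hoare order must match the vacuous cases: when $f_d(s,x)=\emptyset$ the jump condition must hold automatically, and if $\sigma_d$ returns $\emptyset$ the system must be forced to have no jumps. I would check both cases explicitly against Example~\ref{ex:posetal2}. Everything else, including the comparison property, is supplied by the hypothesis that $\binom{R_d}{R_c}$ is a measurement object.
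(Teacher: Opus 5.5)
Your proposal is correct and follows essentially the same route as the paper: instantiate Thm.~\ref{thm:CLT} in $\Chart$ with $\H$, read off \eqref{eq:pd_c} from the chart composition rule, and split the Lyapunov diagram into the fiberwise flow comparison \eqref{eq:flow} and the Hoare-order jump comparison \eqref{eq:jump}, then unpack the stability conclusion componentwise. Your explicit checks that $\H(V)_d$ discards the $\underline{\T M}$ factor, so that $V_c$ is evaluated at the post-jump point, and of the empty-set cases of the Hoare order, are more careful than the paper's text but add no new ingredient.
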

\begin{proof}
    We apply Thm.~\ref{thm:CLT} to the following setting. The base category $\Chart$ has finite products by Prop.~\ref{prop:Chartproducts}. The time domains are the hybrid time domain objects $\binom{\Lambda}{\coprod_\Lambda I_j}$ with unit clock coalgebras (Def.~\ref{def:clockcoalgebra}), the plant is the $\H$-coalgebra $\binom{f_d}{f_c}$, and the measurement object is $\binom{\sigma_d}{\sigma_c}$, which we assume satisfies the comparison property (Def.~\ref{def:measurementobject}). 
    The posetal structures on $\T R_c$ (fiberwise) and $\P(R_d \times \underline{R_c})$ (Hoare), as described in Ex.~\ref{ex:posetal2}, equip $\H\binom{R_d}{R_c}$ with the required posetal structure. The positive definiteness conditions~\eqref{eq:pd_c} are directly derived from Def.~\ref{def:positivedefinite} in $\Chart$, using the composition rule given in Def.~\ref{def:chart}.
    Diagram~\eqref{eqn:lyapunov} instantiated in $\Chart$ gives two diagrams. The first says precisely \eqref{eq:flow}. The second reduces to the following:
    \[\begin{tikzcd}
    	{S \times \underline M} & {S \times \underline M \times \underline M} & {R_d \times R_c} \\
    	{\P(S \times \underline M)} & {\P(S \times \underline M \times \underline M)} & {\P(R_d \times R_c)}
    	\arrow["{\id \times \Delta}", from=1-1, to=1-2]
    	\arrow["{f_d}"', from=1-1, to=2-1]
    	\arrow["{V_d\times V_c}", from=1-2, to=1-3]
    	\arrow[Rightarrow, from=1-3, to=2-1]
    	\arrow["{\sigma_d}", from=1-3, to=2-3]
    	\arrow["{\P(\id \times \Delta)}"', from=2-1, to=2-2]
    	\arrow["{\P(V_d \times V_c)}"', from=2-2, to=2-3]
    \end{tikzcd}\]
    For $(s,x) \in S \times \underline M$ this says exactly \eqref{eq:jump}.
    
    Thm.~\ref{thm:CLT} now gives stability: there exists a class $\K$ morphism $\alpha \maps \binom{R_d}{R_c} \Rightarrow \binom{R_d}{R_c}$ such that for any solution, $\|(\gamma_d, \gamma_c)\|_{z^*} \leq \alpha(\|(\gamma_d(0), \gamma_c(0))\|_{z^*})$. Since the semi-norm on $\binom{S}{M}$ decomposes as a product of the continuous semi-norm $\|\cdot\|^c_{z^*}$ on $M$ and the discrete semi-norm $\|\cdot\|^d_{z^*}$ on $S \times \underline{M}$, and $\alpha = \binom{\alpha_d}{\alpha_c}$ acts componentwise, the stability bound separates into the two inequalities in~\eqref{eq:stab_c}.
\end{proof}

For the remainder, we elide writing out positive definiteness conditions and checking them except in the few cases when it is enlightening to do so.

\subsect{Hybrid Periodic Orbits}
Before turning to Zeno stability, we demonstrate Thm.~\ref{thm:CLT_hybrid} on hybrid periodic orbits. The orbit itself is the generalized element in question, and different degrees of stability correspond to different choices of measurement object. In all of them, we use the same posetal object $\binom 1{\Rplus}$ as described in Ex.~\ref{ex:posetal2}. 
The discrete part of $V$ is necessarily trivial. The discrete part of $\sigma$ has the form $\sigma_d \maps \Rplus \to \P(\Rplus)$. The jump condition \eqref{eq:jump} reduces to $V_c(x^+) \leq \sigma_d(V_c(x))$.

\begin{cor}[Stability of Hybrid Periodic Orbits]
\label{cor:orbitstability}
    The following stability results hold for the hybrid periodic orbit $\bar\psi$, each obtained by applying Thm.~\ref{thm:CLT_hybrid} with the indicated measurement object and a function $V$ (or $V_\varepsilon$) positive definite with respect to $\bar\psi$.
    \begin{enumerate}
        \item \textbf{Stability.} Take $\sigma_c = 0$ and $\sigma_d(a) = \{a\}$. If $V$ satisfies:
        \begin{align}
            \frac{dV}{dt}(x) \cdot f_v(x) \leq 0, \quad x \in D_v, \label{eq:stab_flow} \\
            V(R_e(x)) \leq V(x), \quad x \in G_e, \label{eq:stab_jump}
        \end{align}
        then $\bar\psi$ is stable.
        \item \textbf{Asymptotic Stability.} Take $\sigma_c(a) = -\alpha_3(a)$ for some $\alpha_3 \in \mathcal{K}$ and $\sigma_d(a) = \{a-\beta(a)\}$ for some continuous $\beta$ with $0 \leq \beta(r) \leq r$. If $V$ satisfies:
        \begin{align}
            \frac{dV}{dt}(x) \cdot f_v(x) \leq -\alpha_3(V(x)), \quad x \in D_v, \label{eq:asym_flow} \\
            V(R_e(x)) \leq V(x) - \beta(V(x)), \quad x \in G_e, \label{eq:asym_jump}
        \end{align}
        then $\bar\psi$ is asymptotically stable.
        \item \textbf{Rapid Exponential Stability (RES).} Take $\sigma_c(a) = -\frac{c}{\varepsilon}a$ and $\sigma_d(a) = \{\kappa_e a\}$. Suppose the inter-jump durations satisfy $0 < \Delta_{\min} \leq \Delta t_k \leq \Delta_{\max}$ and the period is $T \in [T_{\min}, T_{\max}]$. If $V_\varepsilon$ is positive definite with respect to $\bar\psi$ uniformly in $\varepsilon > 0$ and satisfies:
        \begin{align}
            \frac{dV_\varepsilon}{dt}(x) \cdot f_v(x) \leq -\frac{c}{\varepsilon} V_\varepsilon(x), \quad x \in D_v, \label{eq:RES_flow} \\
            V_\varepsilon(R_e(x)) \leq \kappa_e V_\varepsilon(x), \quad x \in G_e, \label{eq:RES_jump}
        \end{align}
        then $\bar\psi$ is RES if $\Pi_\kappa \leq 1$, or if $\varepsilon < \varepsilon^* := cT_{\min}/\log\Pi_\kappa$ if $\Pi_\kappa > 1$, where $\Pi_\kappa := \prod_{i=0}^{K-1}\kappa_{e_i}$. Moreover,
        \begin{equation*}
            V_\varepsilon(t) \leq Ce^{-\lambda_\varepsilon t} V_\varepsilon(0), \ \lambda_\varepsilon \geq \frac{1}{T_{\max}}\left(\frac{cT_{\min}}{\varepsilon} - \log\Pi_\kappa\right).
        \end{equation*}
    \end{enumerate}
\end{cor}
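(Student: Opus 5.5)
Each of the three items comes from instantiating Thm.~\ref{thm:CLT_hybrid} with the posetal object $\binom{1}{\Rplus}$, the generalized element $z^* = \bar\psi$, and the indicated measurement object $\binom{\sigma_d}{\sigma_c}$. For each item I will do three things in order. First, check that the chosen $\sigma$ is a measurement object, i.e.\ that it has the comparison property of Def.~\ref{def:measurementobject} on hybrid time domains. Second, observe that the stated inequalities are exactly the flow condition~\eqref{eq:flow} and jump condition~\eqref{eq:jump}. Third, read off the conclusion, supplementing \eqref{eq:stab_c} by an explicit integration of the comparison system where the conclusion is stronger than plain stability.

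The translation step is uniform. The discrete component of $V$ is trivial, so \eqref{eq:jump} reads $\{V_c(x') \mid (s',x') \in f_d(s,x)\} \leq \sigma_d(V_c(x))$ in the Hoare order. Under Construction~\ref{con:hybridcoalgebra}, $f_d(v,x)$ is empty off the guards and equals $\{(t(e),R_e(x))\}$ on $G_e$. Since $\sigma_d(a)$ is a singleton, the Hoare inequality collapses to the scalar inequalities \eqref{eq:stab_jump}, \eqref{eq:asym_jump}, \eqref{eq:RES_jump}. The flow conditions are literally \eqref{eq:flow} on each $D_v$.

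The comparison property is where the work lies. A true solution $\psi$ of $\sigma$ on a hybrid time domain flows by $\dot a = \sigma_c(a)$ on each $I_j$ and jumps by $a \mapsto \sigma_d(a)$ at each $\tau_{j+1}$; a sub-solution $\phi$ satisfies the corresponding inequalities. I will argue by induction on $j$. On each interval I apply the classical scalar comparison lemma \cite{Khalil}, using local Lipschitzness of $\sigma_c$; I will assume $\alpha_3$ is locally Lipschitz or smooth, as morphisms in $\Man$ must be. At a jump I need $\sigma_d$ to be monotone: the maps $a \mapsto a$ and $a \mapsto \kappa_e a$ clearly are. For $a - \beta(a)$, monotonicity is not automatic from $0 \le \beta \le r$, and this is the main obstacle. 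I would first try to bypass it. The bound $\phi(\tau_{j+1}^+) \le \phi(\tau_{j+1}^-) - \beta(\phi(\tau_{j+1}^-)) \le \phi(\tau_{j+1}^-)$ already gives nonincrease across jumps, so in item 2 I can compare instead against the system with $\sigma_d(a) = \{a\}$, which is monotone. That yields stability from Thm.~\ref{thm:CLT_hybrid}. Convergence then follows from the flow decrease $\dot V \le -\alpha_3(V)$ together with nonincreasing jumps. One caveat: if the time spent flowing is bounded (the Zeno case), convergence must come from $\beta$ instead, and I would add $r - \beta(r)$ nondecreasing as a standing assumption if needed.

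For item 3, explicit integration gives $V_\varepsilon(\tau_{j+1}^-) \le e^{-\frac{c}{\varepsilon}\Delta t_j} V_\varepsilon(\tau_j^+)$ on flows, while jumps multiply by $\kappa_{e_j}$. Over one period of $K$ jumps this gives $V_\varepsilon(t+T) \le \Pi_\kappa e^{-cT/\varepsilon} V_\varepsilon(t) \le e^{-(cT_{\min}/\varepsilon - \log \Pi_\kappa)} V_\varepsilon(t)$. This factor is less than $1$ exactly when $\Pi_\kappa \le 1$ or $\varepsilon < \varepsilon^*$. Within a single period, the partial products of the $\kappa_{e_i}$ and the bounds $\Delta_{\min}, \Delta_{\max}$ give a uniform constant $C$. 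Writing $t = nT + r$ with $T \le T_{\max}$ then produces the rate $\lambda_\varepsilon \ge (cT_{\min}/\varepsilon - \log\Pi_\kappa)/T_{\max}$. Finally, uniform positive definiteness of $V_\varepsilon$ converts this decay into exponential convergence of $\|x\|_{\bar\psi}$, i.e.\ RES.
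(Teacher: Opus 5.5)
Your treatment of items 1 and 3 follows the paper: instantiate Thm.~\ref{thm:CLT_hybrid} with $\binom{1}{\R_{\geq 0}}$, collapse the Hoare inequality on the singleton $f_d(v,x)$, get the comparison property from the classical comparison lemma on flows, and integrate the linear comparison system period by period. Your constant $C$, built from partial products of the $\kappa_{e_i}$, is more careful than the paper's $C=\max(1,\max_i\kappa_{e_i})$. That constant is too small whenever a running product $\kappa_{e_0}\kappa_{e_1}\cdots$ exceeds the largest single $\kappa_{e_i}$.

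Where you genuinely diverge is item 2. The paper says the comparison property for $\sigma_d(a)=\{a-\beta(a)\}$ follows from the classical comparison lemma. At a jump, however, that step needs $a\mapsto a-\beta(a)$ to be nondecreasing, and $0\le\beta(r)\le r$ does not give this. For example, take $\beta=0$ on $[0,1]$, $\beta(r)=2(r-1)$ on $[1,2]$ and $\beta(r)=r$ beyond: pre-jump values $1\le 2$ become post-jump values $1>0$. Your fix is to run Thm.~\ref{thm:CLT_hybrid} with the monotone object $\sigma_d(a)=\{a\}$, which is legitimate since $V-\beta(V)\le V$. You then get attractivity directly from $\dot V\le-\alpha_3(V)$. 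This yields stability with no extra hypotheses, and attractivity for every solution with unbounded flow time. The paper instead folds attractivity into convergence of $\sigma$-solutions, but that rests on the unjustified comparison step.

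Two corrections. First, your Zeno caveat names the wrong extra hypothesis. Monotonicity of $r-\beta(r)$ does not help: $\beta\equiv 0$ satisfies it and gives no decay across jumps. What you need is $\beta(r)>0$ for $r>0$, which the paper also uses without stating it. With that assumption no monotonicity is required:
\begin{itemize}
\item $V$ is nonincreasing in hybrid time along the solution, so it converges to some $L\ge 0$.
\item Passing to the limit in $V(\tau_{j+1}^+)\le V(\tau_{j+1}^-)-\beta(V(\tau_{j+1}^-))$, using continuity of $\beta$, gives $\beta(L)=0$, hence $L=0$.
\end{itemize}
Second, the last step of item 3 needs more than uniform class $\K$ bounds. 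To turn exponential decay of $V_\varepsilon$ into exponential decay of $\|x\|_{\bar\psi}$, you need power-type bounds $k_1 r^p\le V_\varepsilon\le k_2 r^p$. With a general $\underline{\alpha}$, the estimate $\underline{\alpha}^{-1}(Ce^{-\lambda t}\overline{\alpha}(r_0))$ need not be exponential.
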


\begin{figure}
    \centering
    \includegraphics[width=\linewidth]{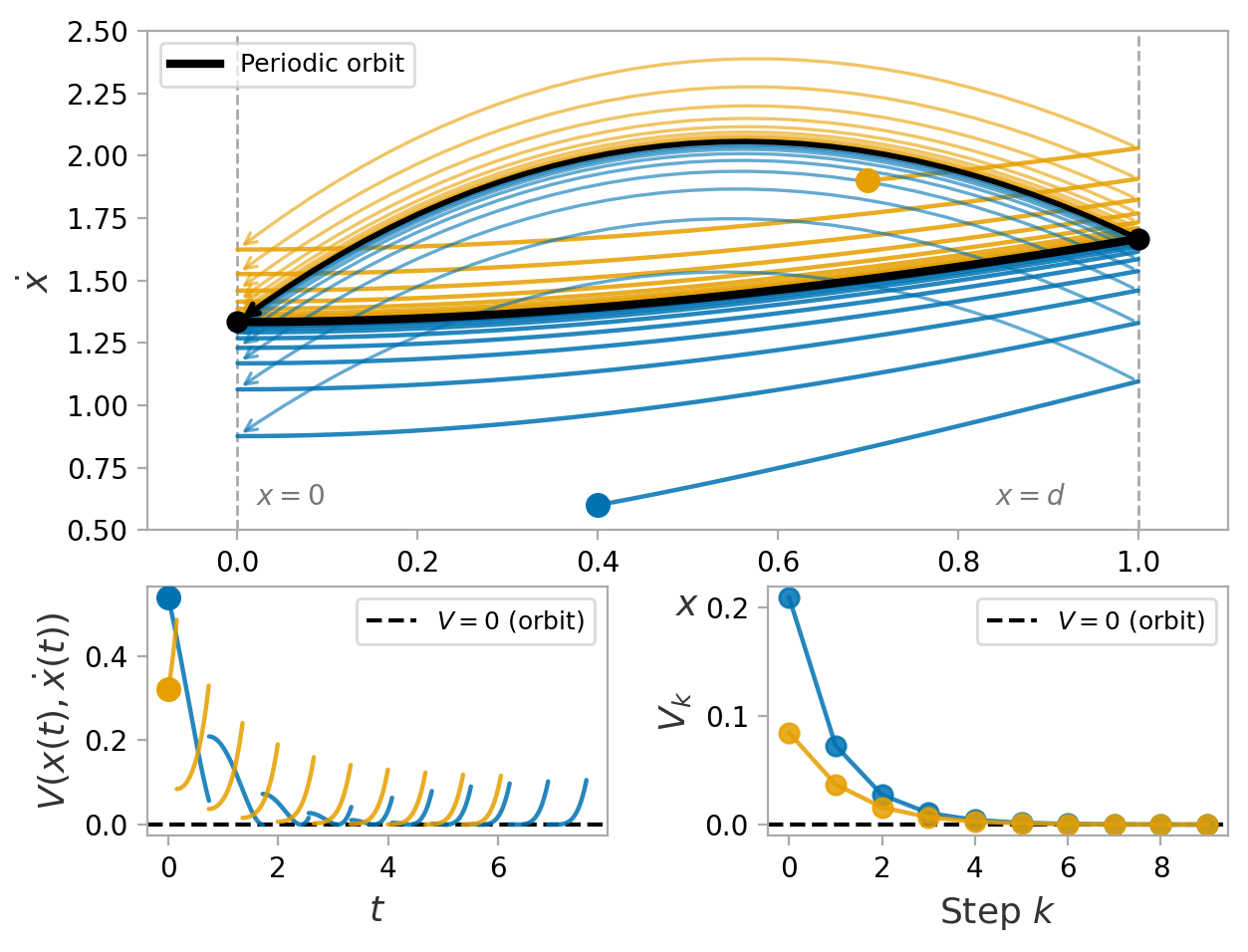}
    \vspace{-6mm}
    \caption{Convergence to a hybrid periodic orbit in a reduced bipedal walking model. Left: phase portrait with periodic orbit (black) and two converging trajectories. Middle: Lyapunov function $V(t) = (\dot x - \dot x^*)^2$ in continuous time. Right: $V_k$ at each step, converging at rate $\lambda^2 = 0.64$.}
    \label{fig:walking}
    \vskip -4mm
\end{figure}

\begin{proof}
    All three parts apply Thm.~\ref{thm:CLT_hybrid} with posetal object $\binom{1}{\R_{\geq 0}}$. Since $R_d = 1$, the Lyapunov morphism is determined by its continuous component, which we denote $V$ (or $V_\varepsilon$ in part~3). Since $f_d(v,x)$ is a singleton on the guard, the Hoare order reduces  condition~\eqref{eq:jump} to $V(R_e(x)) \leq \max \sigma_d(V(x))$.


    \textit{Part~1.} Substituting $\sigma_c = 0$ and $\sigma_d(a) = \{a\}$ into~\eqref{eq:flow}--\eqref{eq:jump} gives~\eqref{eq:stab_flow}--\eqref{eq:stab_jump}. The measurement system has constant solutions, so the comparison property holds trivially.

    \textit{Part~2.} Substituting $\sigma_c(a) = -\alpha_3(a)$ and $\sigma_d(a) = \{a - \beta(a)\}$ gives~\eqref{eq:asym_flow}--\eqref{eq:asym_jump}. Exact solutions of $\sigma$ are non-increasing, and the classical comparison lemma gives $\phi \leq \psi$ for any sub-solution $\phi$, confirming the comparison property. Since $\alpha_3 \in \K$ and $\beta(r) > 0$ for $r > 0$, solutions of $\sigma$ converge to zero, giving asymptotic stability.

    \textit{Part~3.} Substituting $\sigma_c(a) = -\frac{c}{\varepsilon}a$ and $\sigma_d(a) = \{\kappa_e a\}$ gives~\eqref{eq:RES_flow}--\eqref{eq:RES_jump}. The comparison property follows as in Part~2. 
    For Part~3, between jumps $\dot{r} = -\frac{c}{\varepsilon}r$, and at jumps $r^+ = \kappa_{e_k} r^-$, so over one period of $K$ jumps with duration $T$:
    \[
        V_\varepsilon(\tau_K) \leq V_\varepsilon(\tau_0) \cdot e^{-(c/\varepsilon)T} \cdot \Pi_\kappa,
    \]
    where $\Pi_\kappa = \prod_{i=0}^{K-1} \kappa_{e_i}$. The per-period factor $\mu := e^{-(c/\varepsilon)T}\Pi_\kappa$ satisfies $\mu < 1$ iff $(c/\varepsilon)T > \log\Pi_\kappa$, which holds for all $\varepsilon > 0$ when $\Pi_\kappa \leq 1$ and for $\varepsilon < \varepsilon^*$ when $\Pi_\kappa > 1$. Accounting for partial periods, $V_\varepsilon(t) \leq C e^{-\lambda_\varepsilon t} V_\varepsilon(0)$ with $C = \max(1, \max_i \kappa_{e_i})$ and
    \[
        \lambda_\varepsilon = \frac{1}{T}\left(\frac{cT}{\varepsilon} - \log\Pi_\kappa\right) \geq \frac{1}{T_{\max}}\left(\frac{cT_{\min}}{\varepsilon} - \log\Pi_\kappa\right),
    \]
    recovering the RES result of~\cite{RESCLF}.
\end{proof}

\section{Zeno Stability}
%

This section leverages CLT to derive new conditions for the stability of Zeno equilibria and the existence of Zeno behavior.
We introduce a Zeno measurement object yielding stability conditions, then discuss how stability can be transferred between hybrid systems via simulation morphisms. 


We begin by specializing the measurement object to obtain a Zeno stability result. The underlying posetal object is $\binom{\R_{\geq 0}}{\R_{\geq 0}}$ with the pointwise order (Ex.~\ref{ex:posetal2}) and base point $0_R = (0,0)$.

\begin{defn}
\label{def:zenosigma}
    The \define{Zeno measurement object} is the $\H$-coalgebra $\sigma \maps \binom{\R_{\geq 0}}{\R_{\geq 0}} \chart \H\binom{\R_{\geq 0}}{\R_{\geq 0}}$ given by
    \begin{equation}
    \label{eq:sigma}
        \sigma_c(r) = -c, \qquad \sigma_d(a, r) = \begin{cases} \{(\lambda a, a)\} & r = 0, \\ \emptyset & r > 0, \end{cases}
    \end{equation}
    for fixed $c > 0$ and $\lambda \in (0,1)$. Note that $r$ is the continuous variable, and $a$ is the discrete variable.
\end{defn}

\begin{prop}
\label{prop:comparisonminusc}
    The Zeno measurement object (Def.~\ref{def:zenosigma}) satisfies the comparison property \eqref{eq:comparisonproperty}.
\end{prop}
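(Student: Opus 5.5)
We need to show that if $\psi$ is a true solution of $\sigma$, $\phi$ is a sub-solution, and $\phi(0_I)\le\psi(0_I)$, then $\phi\le\psi$ componentwise. Here $\psi,\phi\maps\binom{\Lambda}{\bigsqcup_\Lambda I_j}\chart\binom{\R_{\geq 0}}{\R_{\geq 0}}$. The plan is to unpack the two (lax) coalgebra diagrams in $\Chart$ exactly as in the proof of Thm.~\ref{thm:CLT_hybrid}, into a flow part and a jump part, and then induct over the interval index $j\in\Lambda$. Within each interval we use a one-dimensional comparison argument; at each jump time we propagate the inequality through $\sigma_d$.

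\textbf{Flow part.} On $I_j$ the continuous components satisfy $\partial_t\psi_c=-c$ and $\partial_t\phi_c\le -c$, using the fiberwise order on $\T\R_{\geq 0}$. Hence $\psi_c(j,t)=\psi_c(j,\tau_j)-c(t-\tau_j)$ and $\phi_c(j,t)\le\phi_c(j,\tau_j)-c(t-\tau_j)$. So $\phi_c(j,\tau_j)\le\psi_c(j,\tau_j)$ implies $\phi_c\le\psi_c$ on all of $I_j$. This is the classical comparison lemma in its most trivial form, since the right-hand side is constant and no Lipschitz argument is needed.

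The discrete components $\psi_d(j,k,t),\phi_d(j,k,t)$ are constrained only through the jump diagram. Off jump points the true-solution equation~\eqref{eq:solutiondiscrete} forces $\sigma_d(\psi_d,\psi_c)=\emptyset$, i.e.\ $\psi_c>0$. Likewise the lax diagram in the Hoare order forces $\sigma_d(\phi_d,\phi_c)\le\emptyset$, hence $=\emptyset$, i.e.\ $\phi_c>0$ there.

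\textbf{Jump part.} At $j=k$, $t=\tau_{j+1}$ the true solution has $\sigma_d(\psi_d,\psi_c)$ a singleton, so $\psi_c(j,\tau_{j+1})=0$. It then jumps to $\psi_d(j+1,j+1,\tau_{j+1})=\lambda\psi_d(j,j,\tau_{j+1})$ and $\psi_c(j+1,\tau_{j+1})=\psi_d(j,j,\tau_{j+1})$. For the sub-solution, the Hoare inequality $\{(\phi_d^+,\phi_c^+)\}\le\sigma_d(\phi_d,\phi_c)$ with a nonempty left side forces the right side to be nonempty, so $\phi_c(j,\tau_{j+1})=0$. It then gives $\phi_d^+\le\lambda\phi_d$ and $\phi_c^+\le\phi_d$. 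By monotonicity of $a\mapsto(\lambda a,a)$, the inductive hypothesis $\phi_d(j,j,\cdot)\le\psi_d(j,j,\cdot)$ carries over to the next interval in both components.

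One needs the discrete components to be constant along each interval (for $t\in I_j$, $j=k$) in order to pass from $\tau_j$ to $\tau_{j+1}$. I expect this to be the main obstacle, because the clock coalgebra's $1_d$ is empty off jump points and so imposes no equation there. I would handle it by restricting, as in Def.~\ref{def:stable} and Thm.~\ref{thm:CLT_hybrid}, to the values actually used in the stability bound. These are evaluations at $(j,j,t)$, where we adopt the convention (implicit in Lemma~\ref{lem:solutionsequiv}, where $\psi_d(j,k,t)=\rho(j)$) that discrete components of solutions depend only on $j$.

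With that convention, the induction on $j$ is complete. The base case is $\phi(0_I)\le\psi(0_I)$; the flow step preserves the continuous inequality across $I_j$; the jump step transfers the discrete inequality, and yields the continuous initial inequality, for $I_{j+1}$. Hence $\phi\le\psi$.
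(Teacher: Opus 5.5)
Your proof is correct and follows the paper's argument. Both induct over the intervals $I_j$, apply the trivial comparison lemma for $\dot\phi_c\le -c=\dot\psi_c$ on each interval, and use monotonicity of $a\mapsto(\lambda a,a)$ at jumps. Your explicit convention that discrete components depend only on $j$ is the same assumption the paper makes silently when it passes from $\varphi_d\le\psi_d$ at $\tau_j$ to the jump at $\tau_{j+1}$.

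One slip, which your induction never uses: off jump points the sub-solution condition is $\emptyset\le\sigma_d(\phi_d,\phi_c)$, which is vacuous (as the paper says), not $\sigma_d(\phi_d,\phi_c)\le\emptyset$. So your claim that $\phi_c>0$ there does not follow from the reason you give.
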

\begin{proof}
    Let $\psi$ be a solution to $\sigma$ and $\varphi$ a sub-solution with $\varphi(0) \leq \psi(0)$. Unpacking the (lax) coalgebra morphism conditions: on each $I_j$, $\dot{\psi}_c = -c$ and $\dot{\varphi}_c \leq -c$; at each jump, $\psi_c = \varphi_c = 0$ (forced since $\sigma_d(a,r) = \emptyset$ for $r > 0$), $\psi_d^+ = \lambda\psi_d$ and $\varphi_d^+ \leq \lambda\varphi_d$, and $\psi_c^+ = \psi_d$ and $\varphi_c^+ \leq \varphi_d$. Between jumps the discrete lax condition is vacuous.

    We show $\varphi \leq \psi$ by induction. On $I_0$, $\dot{\varphi}_c \leq -c = \dot{\psi}_c$ and $\varphi_c(0,\tau_0) \leq \psi_c(0,\tau_0)$, so $\varphi_c \leq \psi_c$ on $I_0$ by the classical comparison lemma. Assuming $\varphi_c \leq \psi_c$ on $I_j$ and $\varphi_d \leq \psi_d$ at time $\tau_j$, the jump gives $\varphi_d^+ \leq \lambda\varphi_d \leq \lambda\psi_d = \psi_d^+$ and $\varphi_c^+ \leq \varphi_d \leq \psi_d = \psi_c^+$, and the comparison lemma extends $\varphi_c \leq \psi_c$ to $I_{j+1}$.
\end{proof}

With the Zeno measurement object \eqref{eq:sigma}, the jump condition \eqref{eq:jump} simplifies considerably. Since $\sigma_d(a, r) = \emptyset$ when $r > 0$, condition \eqref{eq:jump} can only be satisfied at points $(s,x)$ where $f_d(s,x)$ is nonempty if $V_c(x) = 0$, in which case $\sigma_d(V_d(s,x), 0) = \{(\lambda V_d(s,x), V_d(s,x))\}$. The jump condition therefore reduces to three conditions: for all $(s,x)$ with $f_d(s,x) \neq \emptyset$ and all $(s',x') \in f_d(s,x)$:
\begin{align}
    V_c(x) &= 0, \label{eq:jump_vc0} \\
    V_d(s',x') &\leq \lambda \cdot V_d(s,x), \label{eq:jump_vd} \\
    V_c(x') &\leq V_d(s,x). \label{eq:jump_vc}
\end{align}

\begin{cor}\label{cor:zenostability}
    Let $H = (\Gamma, D, G, R, F)$ be a hybrid system with a forward-invariant point $z^* \in \bigsqcup_{v \in V} D_v$. Suppose there exist constants $c > 0$ and $\lambda \in (0,1)$, class $\K$ functions $\underline{\alpha}_1, \overline{\alpha}_1, \underline{\alpha}_2, \overline{\alpha}_2$, and positive definite maps $V_c, V_d \maps \bigsqcup_{v \in V} D_v \to \R_{\geq 0}$ with $V_c$ continuously differentiable, satisfying 
    the flow condition, for all $v \in V$ and $x \in D_v$:
    \begin{align}
        \frac{dV_c}{dt}(x) \cdot f_v(x) &\leq -c, \label{eq:zeno_flow2}
    \end{align}
    and the jump conditions, for all $e \in E$ and $x \in G_e$:
    \begin{align}
        V_c(x) &= 0, \label{eq:zeno_guard2} \\
        V_d(R_e(x)) &\leq \lambda  V_d(x), \label{eq:zeno_jump_Vd2} \\
        V_c(R_e(x)) &\leq V_d(x). \label{eq:zeno_jump_Vc2}
    \end{align}
    Then $z^*$ is stable.
\end{cor}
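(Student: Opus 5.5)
The plan is to obtain Corollary~\ref{cor:zenostability} as a direct instance of Thm.~\ref{thm:CLT_hybrid}. We take the hybrid system in its $\H$-coalgebra form $\binom{f_d}{f_c}\maps\binom{V}{M}\chart\H\binom{V}{M}$ given by Construction~\ref{con:hybridcoalgebra}, with $M=\bigsqcup_v D_v$. The measurement object is the Zeno measurement object of Def.~\ref{def:zenosigma}, with the given $c$ and $\lambda$, on the posetal object $\binom{\R_{\geq 0}}{\R_{\geq 0}}$. Its comparison property is supplied by Prop.~\ref{prop:comparisonminusc}, so the hypotheses on the setting are met. The point $z^*$ is viewed as a generalized element $\binom{1}{1}\chart\binom{V}{M}$: its continuous part picks $z^*\in M$, and its discrete part sends the single point to the mode containing $z^*$. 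Forward invariance is assumed.

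Next we define the Lyapunov chart $\binom{V_d}{V_c}\maps\binom{V}{M}\chart\binom{\R_{\geq 0}}{\R_{\geq 0}}$. The continuous component is the given $V_c$. The discrete component is $V_d(s,x):=V_d(x)$, so the mode argument is simply ignored; this is legitimate because each $x$ already lies in a unique $D_v$. Positive definiteness \eqref{eq:pd_c} then follows from the hypotheses. We take $\underline\alpha_c=\underline\alpha_1$ and $\overline\alpha_c=\overline\alpha_1$. We take the discrete class $\K$ components to be $\underline\alpha_2$ and $\overline\alpha_2$ applied to the continuous seminorm argument, using the fact that $\|(s,x)\|^d_{z^*}$ and $\|x\|^c_{z^*}$ both control distance to $z^*$. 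If needed, we add the first argument so that the map has an order-preserving inverse in the sense of Def.~\ref{def:classK}.

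It remains to verify the two lax-commutativity conditions.
\begin{enumerate}
\item The flow condition \eqref{eq:flow} with $\sigma_c\equiv -c$ is exactly \eqref{eq:zeno_flow2}, checked mode by mode since $f_c=\bigsqcup_v f_v$.
\item For the jump condition \eqref{eq:jump}, the earlier discussion reduces it, for the Zeno $\sigma$, to \eqref{eq:jump_vc0}--\eqref{eq:jump_vc} at every $(s,x)$ with $f_d(s,x)\neq\emptyset$.
\end{enumerate}
By Construction~\ref{con:hybridcoalgebra}, $f_d(v,x)\neq\emptyset$ exactly when $x\in G_e$ for some $e\in E_v$. In that case $f_d(v,x)$ is the singleton $\{(t(e),R_e(x))\}$, by the disjoint-guards assumption. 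Under this identification, \eqref{eq:jump_vc0}--\eqref{eq:jump_vc} become \eqref{eq:zeno_guard2}--\eqref{eq:zeno_jump_Vc2}. Where $f_d=\emptyset$, the Hoare order makes \eqref{eq:jump} hold vacuously whatever $\sigma_d$ is. Thm.~\ref{thm:CLT_hybrid} then gives the stability bounds \eqref{eq:stab_c}, and the continuous bound is the stability of $z^*$.

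The main obstacle I expect is making positive definiteness and the semi-norm bookkeeping honest in $\Chart$. The discrete class $\K$ morphism must be a two-argument function with an order-preserving inverse. Care is also needed at modes $s$ whose domain does not contain $z^*$, where the discrete seminorm may be positive even though $V_d(x)$ is small. I would first try the choice $\alpha_d(a,r)=a+\alpha_2(r)$, which is order-preserving with an inverse. Failing that, I would fall back on the fact that only the continuous bound is needed to conclude stability of $z^*$.
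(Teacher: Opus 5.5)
Your proposal is correct and matches the paper's proof: instantiate Thm.~\ref{thm:CLT_hybrid} with the Zeno measurement object, take the comparison property from Prop.~\ref{prop:comparisonminusc}, match \eqref{eq:flow} with \eqref{eq:zeno_flow2}, and use the Hoare order with $\sigma_d(a,0)=\{(\lambda a,a)\}$ to reduce \eqref{eq:jump} to \eqref{eq:zeno_guard2}--\eqref{eq:zeno_jump_Vc2}; your points about singleton resets and the vacuous case $f_d=\emptyset$ are details the paper leaves implicit. The paper never does the two-argument class $\K$ bookkeeping you flag as the main obstacle, and simply treats positive definiteness as given by the hypotheses. That is just as well, because your tentative $\alpha_d(a,r)=a+\alpha_2(r)$ would force the inverse chart's discrete part to be $a-\alpha_2(\alpha_1^{-1}(r))$, which leaves $\R_{\geq 0}$ and is not order-preserving.
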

\begin{proof}
    We verify the hypotheses of Thm.~\ref{thm:CLT_hybrid} with the Zeno measurement object $\sigma$ (Def.~\ref{def:zenosigma}). The posetal object is $\binom{\R_{\geq 0}}{\R_{\geq 0}}$ with the pointwise order and base point $(0,0)$. 
    For the flow condition~\eqref{eq:flow}: $\sigma_c(r) = -c$, so~\eqref{eq:zeno_flow2} gives $\frac{dV_c}{dt} \cdot f_v(x) \leq -c = \sigma_c(V_c(x))$. For the jump conditions: as derived in~\eqref{eq:jump_vc0}--\eqref{eq:jump_vc}, $\sigma_d(a,0) = \{(\lambda a, a)\}$ and the Hoare order decompose~\eqref{eq:jump} into~\eqref{eq:zeno_guard2}--\eqref{eq:zeno_jump_Vc2}. The comparison property holds by Prop.~\ref{prop:comparisonminusc}. Thm.~\ref{thm:CLT_hybrid} gives stability of $z^*$.
\end{proof}

\begin{thm}[Zeno Stability]\label{thm:ZenoStability}
    Under the hypotheses of Cor.~\ref{cor:zenostability}, suppose additionally that $V_d$ is continuously differentiable and satisfies
    \begin{align}
        \frac{dV_d}{dt}(x) \cdot f_v(x) &\leq 0, \qquad x \in D_v. \label{eq:zeno_flow_Vd}
    \end{align}
    Then for any execution $\chi = (\Lambda, I, \rho, C)$ with inter-jump durations $\Delta t_k := \tau_{k+1} - \tau_k$:
    \begin{align}
        \sum_{k \in \Lambda} \Delta t_k \leq \frac{V_c(c_0(\tau_0))}{c} + \frac{V_d(c_0(\tau_0))}{c(1 - \lambda)} < \infty. \label{eq:zenobound}
    \end{align}
    In particular, if $\Lambda = \N$, then $\chi$ is a Zeno execution.
\end{thm}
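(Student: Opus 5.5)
Along an execution we track the two Lyapunov quantities $V_c$ and $V_d$ directly, giving a telescoping bound on each inter-jump duration. Write $x_k^- := c_k(\tau_{k+1})$ for the pre-jump state on $I_k$ and $x_{k+1}^+ := c_{k+1}(\tau_{k+1}) = R_{e_k}(x_k^-)$ for the post-jump state, with $x_0^+ := c_0(\tau_0)$. The flow condition \eqref{eq:zeno_flow2} gives $\frac{d}{dt}V_c(c_k(t)) \leq -c$ on $I_k$. Since $V_c \geq 0$, this yields
\[
c\,\Delta t_k \leq V_c(x_k^+) - V_c(x_k^-) \leq V_c(x_k^+).
\]
If $\tau_{k+1}$ is a genuine jump time, \eqref{eq:zeno_guard2} even gives $V_c(x_k^-)=0$. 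For a final interval of a finite execution, the inequality with $V_c \geq 0$ still suffices. So $\Delta t_k \leq V_c(x_k^+)/c$ for every $k \in \Lambda$.

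Next we bound $V_c(x_k^+)$ for $k \geq 1$. By \eqref{eq:zeno_jump_Vc2}, $V_c(x_{k}^+) \leq V_d(x_{k-1}^-)$. Then we propagate $V_d$. Along flows, \eqref{eq:zeno_flow_Vd} gives $V_d(x_{j}^-) \leq V_d(x_j^+)$. At jumps, \eqref{eq:zeno_jump_Vd2} gives $V_d(x_{j+1}^+) \leq \lambda V_d(x_j^-)$. Inducting yields
\[
V_d(x_j^-) \leq \lambda^{j} V_d(x_0^+),
\]
hence $V_c(x_k^+) \leq \lambda^{k-1} V_d(x_0^+)$ for $k \geq 1$.

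Summing, we get
\[
\sum_{k\in\Lambda}\Delta t_k \leq \frac{V_c(x_0^+)}{c} + \frac{1}{c}\sum_{k\geq 1}\lambda^{k-1}V_d(x_0^+) = \frac{V_c(c_0(\tau_0))}{c} + \frac{V_d(c_0(\tau_0))}{c(1-\lambda)},
\]
which is finite since $\lambda\in(0,1)$. For finite $\Lambda$ the sum is a partial sum and is bounded by the same quantity. If $\Lambda=\N$, then $\tau_\infty = \tau_0 + \sum_k \Delta t_k < \infty$, so $\chi$ is Zeno by definition.

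The main obstacle is bookkeeping rather than analysis. We must identify the execution's edge sequence so that the hypotheses on $G_e$ and $R_e$ apply at each jump. This follows from \eqref{classical_condition}: each jump goes through $G_{(\rho(k),\rho(k+1))}$ with reset $R_{(\rho(k),\rho(k+1))}$. Equivalently, via Lemma~\ref{lem:solutionsequiv}, the execution is a solution of the coalgebra, and the derivation of \eqref{eq:jump_vc0}--\eqref{eq:jump_vc} applies pointwise. We also have to make sure the indexing of $\lambda$ powers gives exactly $1/(1-\lambda)$ rather than $\lambda/(1-\lambda)$. A potential subtlety is that $V_c$ and $V_d$ are defined on the disjoint union $\bigsqcup_v D_v$, so the derivatives along $f_{\rho(k)}$ are taken mode by mode; this causes no difficulty.
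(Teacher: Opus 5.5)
Your argument is correct, but it is organized differently from the paper's proof. The paper does not track the sequence of pre- and post-jump values. Instead it builds the single function $W = V_c + V_d/(1-\lambda)$ and checks two things: $\frac{dW}{dt}\cdot f_v \le -c$ during flow, using \eqref{eq:zeno_flow2} and \eqref{eq:zeno_flow_Vd}, and $W(R_e(x)) \le W(x)$ on guards, using \eqref{eq:zeno_jump_Vd2} and \eqref{eq:zeno_jump_Vc2}. It then reads off $\sum_k \Delta t_k \le W(c_0(\tau_0))/c$ in one telescoping step. The weight $1/(1-\lambda)$ in $W$ is exactly the closed form of your geometric series, so both proofs use the same hypotheses and give the same constant. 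The paper's route buys compactness and a statement in Lyapunov form: $W(x)/c$ bounds the remaining flow time from any state. It needs no induction over jumps, so your concern about $\lambda^{k-1}$ versus $\lambda^{k}$ indexing never arises. Your route buys finer information: the per-interval estimate $\Delta t_k \le \lambda^{k-1}V_d(c_0(\tau_0))/c$ for $k \ge 1$. This gives geometric decay of the inter-jump durations and of the tail $\tau_\infty - \tau_k$, which the single function $W$ does not show without the same $V_d$ bookkeeping. As you noticed, your argument uses only $V_c \ge 0$ at pre-jump states rather than the guard condition \eqref{eq:zeno_guard2}. The same is true of the paper's, since $W(R_e(x)) \le V_d(x)/(1-\lambda) \le W(x)$ already follows from $V_c(x) \ge 0$.
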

\begin{proof}
    Define $W \maps \bigsqcup_{v \in V} D_v \to \R_{\geq 0}$ by
    \[
        W(x) := V_c(x) + \frac{V_d(x)}{1 - \lambda}.
    \]
    We show that $W$ is non-negative, decreases at rate at least $c$ during flow, and is non-increasing at jumps.

    \textit{Flow.} For $x \in D_v$:
    \[
        \frac{dW}{dt} \cdot f_v(x) = \frac{dV_c}{dt} \cdot f_v(x) + \frac{1}{1-\lambda}\frac{dV_d}{dt} \cdot f_v(x) \leq -c + 0 = -c,
    \]
    using~\eqref{eq:zeno_flow2} and~\eqref{eq:zeno_flow_Vd}.

    \textit{Jumps.} For $e \in E$ and $x \in G_e$, condition~\eqref{eq:zeno_guard2} gives $V_c(x) = 0$, so $W(x) = V_d(x)/(1 - \lambda)$. After reset:
    \begin{align*}
        W(R_e(x)) 
        &= V_c(R_e(x)) + \frac{V_d(R_e(x))}{1-\lambda} 
        \\&\leq V_d(x) + \frac{\lambda V_d(x)}{1-\lambda} = \frac{V_d(x)}{1-\lambda} = W(x),
    \end{align*}
    using~\eqref{eq:zeno_jump_Vd2} and~\eqref{eq:zeno_jump_Vc2}.

    Since $W \geq 0$ and $\frac{dW}{dt} \leq -c$ during flow while $W$ is non-increasing at jumps, the total flow time satisfies
    \[
        \sum_{k \in \Lambda} \Delta t_k 
        \leq 
        \frac{V_c(c_0(\tau_0))}{c} + \frac{V_d(c_0(\tau_0))}{c(1 - \lambda)} < \infty. \qedhere
    \]
\end{proof}


\begin{example}[Bouncing Ball]
\label{ex:bouncingball}
    A ball of unit mass bounces vertically under gravity with coefficient of restitution $\lambda \in (0,1)$. The state $(x_1, x_2) \in \R_{\geq 0} \times \R$ represents height and vertical velocity respectively. The domain, guard, vector field, and reset map are:
    \begin{align*}
        &D = \{(x_1, x_2) \in \R^2 \mid x_1 \geq 0\}, \\
        &G = \{(x_1, x_2) \in \R^2 \mid x_1 = 0,\ x_2 \leq 0\}, \\
        &f_{\mathrm{ball}}(x_1, x_2) = (x_2,\ -g), \\
        &R_{\mathrm{ball}}(x_1, x_2) = (0,\ -\lambda x_2).
    \end{align*}
    Since $\lambda \in (0,1)$, the ball undergoes infinitely many impacts in finite time, accumulating at the origin. The origin is a forward-invariant point (not an equilibrium of the flow). We apply Thm.~\ref{thm:ZenoStability} with $z^* = (0,0)$.

    Define $V_c, V_d \maps D \to \R_{\geq 0}$ by
    \begin{equation*}
        V_c(x_1, x_2) = c \cdot \tau(x_1, x_2), \ 
        V_d(x_1, x_2) = \tfrac{2c}{g} \cdot \upsilon(x_1, x_2),
    \end{equation*}
    \begin{equation*}
        \tau(x_1, x_2) = \frac{x_2 + \sqrt{x_2^2 + 2gx_1}}{g}, \ \upsilon(x_1, x_2) = \sqrt{x_2^2 + 2gx_1}.
    \end{equation*}
    Here $\tau(x_1, x_2)$ is the time remaining until the next impact and $\upsilon(x_1, x_2)$ is the speed at the next impact. 

    \textit{Flow condition.} Along $f_{\mathrm{ball}}$, we have $\frac{dV_c}{dt} \cdot f_{\mathrm{ball}}(x) = -c$, so \eqref{eq:zeno_flow2} holds with equality.
    
    \textit{Jump conditions.} On the guard $x_1 = 0$, $x_2 \leq 0$, we have $\tau(0,x_2) = (x_2 + |x_2|)/g = 0$, so $V_c = 0$, verifying \eqref{eq:zeno_guard2}. After reset to $(0, -\lambda x_2)$ with $-\lambda x_2 \geq 0$:
    \begin{align}
        V_d(R_{\mathrm{ball}}(x)) &= \frac{2c\lambda}{g}|x_2| = \lambda V_d(x) \leq V_d(x), \label{eq:ball_jump1} \\
        V_c(R_{\mathrm{ball}}(x)) &= c\tau(0,-\lambda x_2) = \frac{2c\lambda}{g}|x_2| = \lambda V_d(x), \label{eq:ball_jump2}
    \end{align}
    verifying \eqref{eq:zeno_jump_Vd2} and \eqref{eq:zeno_jump_Vc2} with equality. By Thm.~\ref{thm:ZenoStability}, the origin is Zeno stable and
    \begin{equation}
        \sum_{k \in \N} \Delta t_k \leq \frac{V_c(x_0)}{c} + \frac{V_d(x_0)}{c(1-\lambda)} = \tau_0 + \frac{2\upsilon_0}{g(1-\lambda)},
    \end{equation}
    where $\tau_0 = \tau(x_0)$ is the time to first impact and $\upsilon_0 = \upsilon(x_0)$ is the speed at first impact.
\end{example}

\begin{figure}[t]
    \centering
    \includegraphics[width=1\linewidth]{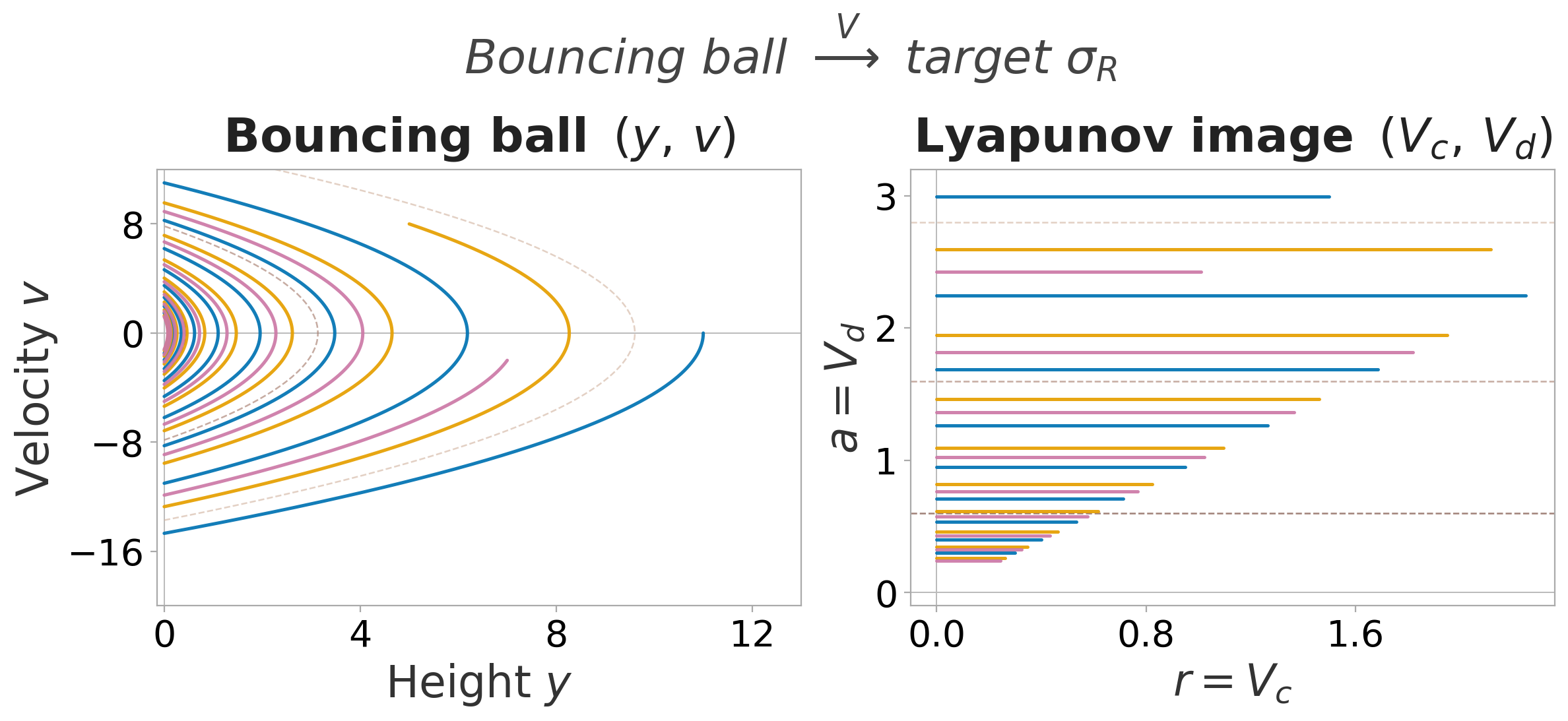}
    \vspace{-6mm}
    \caption{Phase portrait of the bouncing ball (left) and its image under the Lyapunov map $V$ in the target system $\sigma_R$ (right). }
    \label{fig:ball_lyapunov}
    \vskip -4mm
\end{figure}

\subsect{Transferring Stability via Simulation}

\begin{defn}
\label{def:pullbacknorm}
    Let $\Phi = \binom{\Phi_d}{\Phi_c} \maps \binom{S_E}{M_E} \chart \binom{S_Y}{M_Y}$ be a chart, let $d_Y$ be a semi-metric on $\binom{S_Y}{M_Y}$, and let $y^* \maps \binom{Z_d}{Z_c} \chart \binom{S_Y}{M_Y}$ be a generalized element. The \define{pullback semi-norm} along $\Phi$ with respect to $y^*$ is the map $\|\cdot\|_\Phi \maps \binom{S_E}{M_E} \to R$ defined by:
    \[
        \left\|\binom{s}{x}\right\|_\Phi := \left\|\Phi\binom{s}{x}\right\|_{y^*} = \inf_{Z_d \times \underline{Z_c}} d_Y\left(\Phi\binom{s}{x},\ y^*\binom{z_d}{z_c}\right).
    \]
\end{defn}

\begin{rmk}
\label{rmk:pullbackkernel}
    When $\Phi_c$ is injective, $\|\cdot\|_\Phi$ is a valid semi-metric with kernel $\Phi_c^{-1}(y^*(Z_c))$. When $\Phi_c$ is not injective, $\|\cdot\|_\Phi$ is still well-defined but its kernel is larger than the diagonal, corresponding to stability to the set $\Phi_c^{-1}(y^*(Z_c))$. In the Lagrangian hybrid systems application, this set is precisely $Z_h$.
\end{rmk}

\begin{defn}
\label{def:simulationmorphism}
    Let $\binom{f_d}{f_c} \maps \binom{S_E}{M_E} \chart \H\binom{S_E}{M_E}$ and $\binom{\sigma_d}{\sigma_c} \maps \binom{S_Y}{M_Y} \chart \H\binom{S_Y}{M_Y}$ be $\H$-coalgebras. A chart $\Phi = \binom{\Phi_d}{\Phi_c} \maps \binom{S_E}{M_E} \chart \binom{S_Y}{M_Y}$ is a \define{simulation morphism} from $(E, f)$ to $(Y, \sigma)$ if the following two conditions hold. The continuous simulation condition $\frac{d\Phi_c}{dt} \cdot f_c(x) \leq \sigma_{c}(\Phi_c(x))$, and the discrete simulation condition:
    \begin{align*}
        \{(\Phi_d(s', x'), \Phi_c(x')) \mid& (s', x') \in f_d(s, x)\} \nonumber
        \\&\leq \sigma_{d}(\Phi_d(s, x), \Phi_c(x)). \label{eq:simdiscrete}
    \end{align*}
\end{defn}

\begin{thm}[Transference of Stability via Simulation]
\label{thm:transference}
    Let $(E,f)$ be an $\H$-coalgebra, $(Y,\sigma)$ a measurement object with semi-metric $d_Y$ and generalized element $y^*$, $\Phi \maps E \chart Y$ a simulation morphism, and $V \maps Y \chart R$ a Lyapunov morphism for $\sigma$ with respect to $y^*$. Then $W = V \circ \Phi$ satisfies the conditions of Thm.~\ref{thm:CLT_hybrid} for $(E,f)$ and is positive definite with respect to the pullback semi-norm $\|\cdot\|_\Phi$, certifying stability to $\Phi_c^{-1}(y^*(Z_c))$. If $\Phi_c$ is injective, $W$ certifies stability to the single generalized element $z^* = \Phi^{-1}(y^*(Z_c))$.
\end{thm}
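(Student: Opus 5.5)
The argument is a lax-diagram chase. We paste the simulation square for $\Phi$ onto the Lyapunov square for $V$, and then read off positive definiteness from the definition of the pullback semi-norm. Write $\rho \maps R \chart \H R$ for the measurement coalgebra into which $V$ is a Lyapunov morphism for $(Y,\sigma)$. That is, $V$ lax commutes as in \eqref{eqn:lyapunov}, so $\H(V)\circ\sigma \Rightarrow \rho\circ V$. The simulation condition of Def.~\ref{def:simulationmorphism} is likewise the lax square $\H(\Phi)\circ f \Rightarrow \sigma\circ\Phi$.

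\textbf{Step 1: pasting.} We compute
\[
\H(W)\circ f = \H(V)\circ\H(\Phi)\circ f \Rightarrow \H(V)\circ\sigma\circ\Phi \Rightarrow \rho\circ V\circ\Phi = \rho\circ W.
\]
The first step uses functoriality of $\H$ (Prop.~\ref{prop:Hfunctor}). The second step requires postcomposition with $\H(V)$ to preserve order, and the third uses the precomposition axiom of Def.~\ref{def:posetal}.

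The postcomposition step is the main obstacle, since Def.~\ref{def:posetal} only guarantees that precomposition is monotone. We check it componentwise in $\Chart$.

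\emph{Continuous part.} The tangent map of $V_c$ must be monotone fiberwise on $\T R_c$. We get this by reading the continuous simulation condition as a chain rule, $\frac{d}{dt}V_c(\Phi_c(x)) = V_c'(\Phi_c(x))\,\frac{d\Phi_c}{dt}f_c(x)$. It suffices that $V_c$ is nondecreasing along the order of $M_Y$, which holds in the intended examples: in the Lagrangian application $M_Y$ is $\R_{\geq0}$-valued and $V$ is built from class $\K$ functions. If needed, we add this monotonicity as a standing assumption on Lyapunov morphisms into ordered targets.

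\emph{Discrete part.} The Hoare order is preserved by the direct image $\P(V_d\times V_c)$ whenever $V_d\times V_c$ is monotone. Indeed, if each $a\in A$ lies below some $b\in B$, then $V(a)\le V(b)$, so the image of $A$ is Hoare-below the image of $B$. Chasing \eqref{eq:jump} through the composition rule of Def.~\ref{def:chart} then gives the discrete Lyapunov condition for $W$.

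\textbf{Step 2: positive definiteness.} By Def.~\ref{def:pullbacknorm}, $\|\binom sx\|_\Phi = \|\Phi\binom sx\|_{y^*}$, so
\[
\underline\alpha(\|\cdot\|_\Phi)=\underline\alpha(\|\cdot\|_{y^*})\circ\Phi \Rightarrow V\circ\Phi = W \Rightarrow \overline\alpha(\|\cdot\|_{y^*})\circ\Phi = \overline\alpha(\|\cdot\|_\Phi).
\]
This is just precomposition of the positive-definiteness triangle for $V$ by $\Phi$, which Def.~\ref{def:posetal} allows. So the same class $\K$ morphisms $\underline\alpha,\overline\alpha$ work for $W$. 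Forward invariance of $\Phi_c^{-1}(y^*(Z_c))$ also comes out of this: a solution starting where $W=0$ stays there, by the comparison property together with $\sigma$-solutions from $0$ remaining at $0$.

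\textbf{Step 3: conclusion.} We now invoke Thm.~\ref{thm:CLT_hybrid} with the semi-norm $\|\cdot\|_\Phi$, using Rmk.~\ref{rmk:pullbackkernel}. Its kernel is $\Phi_c^{-1}(y^*(Z_c))$, so the theorem yields stability to that set. When $\Phi_c$ is injective, the remark says $\|\cdot\|_\Phi$ is a genuine semi-norm whose kernel is the image of $z^*=\Phi^{-1}(y^*(Z_c))$, so we recover stability of that generalized element.
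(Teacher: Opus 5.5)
Your proof is correct and is essentially the paper's. You precompose the Lyapunov square for $V$ with $\Phi$, postcompose the simulation square with $\H(V)$, collapse $\H(V)\circ\H(\Phi)=\H(V\circ\Phi)$ by functoriality, and precompose the class-$\K$ bounds with $\Phi$ to get positive definiteness for $\|\cdot\|_\Phi$.

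You are right that the postcomposition step needs $\H(V)$ to be order-preserving. Def.~\ref{def:posetal} does not supply this and the paper's proof uses it without comment, so making that monotonicity an explicit hypothesis is a genuine improvement. There are two slips. The paper's convention is $f \Rightarrow g$ for $f \geq g$, so your arrows point the other way throughout. Your forward-invariance aside also relies on $0$ being a rest point of the measurement coalgebra, which is not part of the definition and is false for the Zeno measurement object of Def.~\ref{def:zenosigma} (its flow is the constant $-c$); forward invariance should simply be taken as a hypothesis, as the paper implicitly does.
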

\begin{proof}
    By the Chart composition formula, $W_c = V_c \circ \Phi_c$ and $W_d(s,x) = V_d(\Phi_d(s,x), \Phi_c(x))$.

    \textit{Positive definiteness.} Substituting $y = \Phi(s,x)$ into the class $\K$ bounds on $V$ gives $\underline{\alpha}(\|\Phi(s,x)\|_{y^*}) \leq W(s,x) \leq \overline{\alpha}(\|\Phi(s,x)\|_{y^*})$, which is positive definiteness with respect to $\|\cdot\|_\Phi$ (Def.~\ref{def:pullbacknorm}).

    \textit{Lax commutativity.} Diagram~\eqref{eqn:lyapunov} for $W = V \circ \Phi$ factors as:
    \[
        \sigma' \circ V \circ \Phi \geq \H(V) \circ \sigma \circ \Phi \geq \H(V) \circ \H(\Phi) \circ f = \H(V \circ \Phi) \circ f,
    \]
    where the first inequality is the Lyapunov condition on $V$, the second is the simulation condition on $\Phi$, and the equality is functoriality. Thm.~\ref{thm:CLT_hybrid} now applies.
\end{proof}

\begin{figure*}
    \centering
    \includegraphics[width=\linewidth]{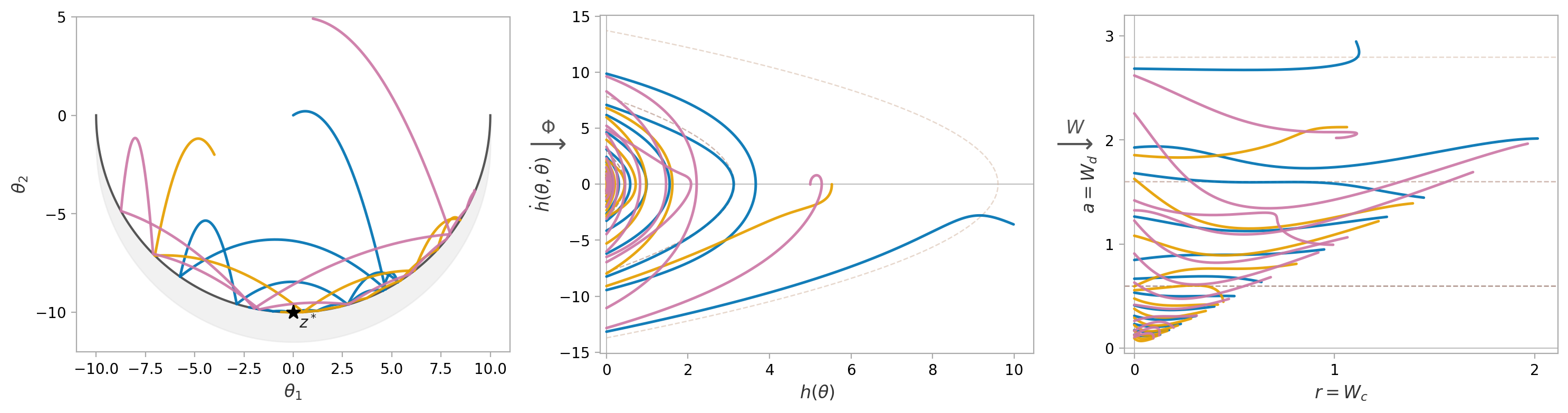}
    \vspace{-8mm}
    \caption{Transference from the bowl system to the bouncing ball via $\Phi$, then to the target $\sigma_R$ via $W$. Left: trajectories in configuration space converging to $z^*$ (star). Middle: the same trajectories in bouncing ball coordinates $(h, \dot h)$. Right: Lyapunov image $(W_c, W_d)$.}
    \label{fig:bowl}
    \vskip -4mm
\end{figure*}

\begin{defn}[Lagrangian Hybrid System {\cite{NonsmoothMechanics}}]
\label{def:LagrangianHybrid}
    A \define{Lagrangian hybrid system} consists of a manifold $\Theta$, a smooth unilateral constraint $h \maps \Theta \to \R$ with $0$ a regular value of $h$, and a Lagrangian $L$ on $\T\Theta$. The state space is $X = \T\Theta$, and the domain, guard, vector field, and reset map are:
    \begin{align*}
        &D_h = \{(\theta, \dot\theta) \in \T\Theta \mid h(\theta) \geq 0\}, \\
        &G_h = \{(\theta, \dot\theta) \in \T\Theta \mid h(\theta) = 0,\ \dot{h}(\theta, \dot\theta) \leq 0\}, \\
        &f_L(\theta, \dot\theta) = (\dot\theta, M(\theta)^{-1} H(\theta, \dot\theta))
        \\&R_h(\theta, \dot\theta) = (\theta,\ \dot\theta^+),
    \end{align*}
    where $f_L$ is the Euler--Lagrange vector field, $M(\theta)$ is the mass matrix, $H(\theta, \dot\theta)$ contains the Coriolis and gravity terms, and the reset is given by the Newtonian impact equation:
    \begin{equation*}
        \dot\theta^+ = \dot\theta - (1+\lambda) \frac{Dh(\theta)\dot\theta}{Dh(\theta)M(\theta)^{-1}Dh(\theta)^\top} M(\theta)^{-1}Dh(\theta)^\top,
    \end{equation*}
    with coefficient of restitution $\lambda \in (0,1)$. This implies $\dot{h}(\theta, \dot\theta^+) = -\lambda\dot{h}(\theta, \dot\theta)$. The set of \define{Zeno equilibria} is:
    \begin{equation*}
        Z_h := \{(\theta, \dot\theta) \in \T\Theta \mid h(\theta) = 0,\ \dot{h}(\theta, \dot\theta) = 0\}.
    \end{equation*}
\end{defn}

Rather than constructing a Lyapunov function for the Lagrangian hybrid system directly, we map to the bouncing ball and transfer stability along this map. Define
\begin{equation*}
    \Phi \maps D_h \to \R_{\geq 0} \times \R, \qquad \Phi(\theta, \dot\theta) = (h(\theta),\ \dot{h}(\theta, \dot\theta)).
\end{equation*}
Since $\ddot{h}(z^*) < 0$ and $\ddot{h}$ is continuous, there exists a neighborhood $U$ of $z^*$ and a constant $\kappa > 0$ such that $\ddot{h} \leq -\kappa$ on $U$. We compose $\Phi$ with the bouncing ball Lyapunov functions, replacing $g$ by $\kappa$, to obtain a Lyapunov morphism for the Lagrangian system directly.

\begin{example}
\label{ex:LagrangianHybrid}
    Define $W_c, W_d \maps D_h \to \R_{\geq 0}$ by
    \begin{equation*}
        W_c(\theta, \dot\theta) = c  \tau_\kappa(\Phi(\theta, \dot\theta)), \ 
        W_d(\theta, \dot\theta) = \frac{2c}{\kappa} \upsilon_\kappa(\Phi(\theta, \dot\theta)),
    \end{equation*}
    \begin{equation*}
        \tau_\kappa(y, v) = \frac{v + \sqrt{v^2 + 2\kappa y}}{\kappa}, \quad \upsilon_\kappa(y, v) = \sqrt{v^2 + 2\kappa y}.
    \end{equation*}

    \textit{Flow condition.} Since $\frac{\partial \tau_\kappa}{\partial v} \geq 0$, the Lie derivative of $W_c$ along $f_L$ is increasing in $\ddot{h}$. The bouncing ball calculation with $\ddot{h} = -\kappa$ gives $\frac{d}{dt}W_c = -c$, so $\ddot{h} \leq -\kappa$ on $U$ gives $\frac{d}{dt}W_c \cdot f_L(\theta, \dot\theta) \leq -c$, verifying \eqref{eq:zeno_flow2}.

    \textit{Jump conditions.} On $G_h$ where $h(\theta) = 0$ and $\dot{h} \leq 0$, we have $\tau_\kappa(0, \dot{h}) = (\dot{h} + |\dot{h}|)/\kappa = 0$, so $W_c = 0$, verifying \eqref{eq:zeno_guard2}. After reset, $\dot{h}^+ = -\lambda\dot{h}^-$, so $\upsilon_\kappa^+ = \lambda\upsilon_\kappa^-$, giving
    \begin{align*}
        W_d(R_h(\theta, \dot\theta)) &= \frac{2c\lambda}{\kappa}\upsilon_\kappa^- = \lambda W_d \leq W_d,  \\
        W_c(R_h(\theta, \dot\theta)) &= c\tau_\kappa(0, -\lambda\dot{h}^-) = \frac{2c\lambda}{\kappa}|\dot{h}^-| = \lambda W_d, 
    \end{align*}
    verifying \eqref{eq:zeno_jump_Vd2} and \eqref{eq:zeno_jump_Vc2} with equality. By Thm.~\ref{thm:ZenoStability}, every Zeno equilibrium $z^* \in Z_h$ satisfying $\ddot{h}(z^*) < 0$ is stable, and for any execution with initial condition $(\theta_0, \dot\theta_0) \in U$:
    \begin{equation*}
        \sum_{k \in \N} \Delta t_k \leq \frac{W_c(\theta_0, \dot\theta_0)}{c} + \frac{W_d(\theta_0, \dot\theta_0)}{c(1-\lambda)} = \tau_{\kappa,0} + \frac{2\upsilon_{\kappa,0}}{\kappa(1-\lambda)},
    \end{equation*}
    where ${\tau_{\kappa,0} = \tau_\kappa(\Phi(\theta_0,\dot\theta_0))}$ and ${\upsilon_{\kappa,0} = \upsilon_\kappa(\Phi(\theta_0,\dot\theta_0))}$. When ${\dim(\Theta) = 1}$, $Z_h$ reduces to a single point and this recovers pointwise Zeno stability. When $\dim(\Theta) > 1$, $Z_h$ is a manifold and we obtain stability to the set of Zeno equilibria.
\end{example}

\bibliographystyle{plain}
\bibliography{references}

\end{document}